\numberwithin{equation}{section} \theoremstyle{plain}
\newtheorem{theorem}{Theorem}[section]
\newtheorem{assumption}{Assumption}[section]
\newtheorem{remark}{Remark}[section]
\begin{document}

\newcommand{\gai}[1]{{#1}}


\makeatletter
\def\ps@pprintTitle{%
  \let\@oddhead\@empty
  \let\@evenhead\@empty
  \let\@oddfoot\@empty
  \let\@evenfoot\@oddfoot
}
\makeatother

\newcommand\tabfig[1]{\vskip5mm \centerline{\textsc{Insert #1 around here}}  \vskip5mm}

\vskip2cm

\title{Parameter learning: stochastic optimal control approach with reinforcement learning}
\author{Shuzhen Yang\thanks{Shandong University-Zhong Tai Securities Institute for Financial Studies, Shandong University, PR China, (yangsz@sdu.edu.cn). This work was supported by the National Key R\&D program of China (Grant No.2018YFA0703900, ZR2019ZD41), National Natural Science Foundation of China (Grant No.11701330), and Taishan Scholar Talent Project Youth Project.}
}
\date{}
\maketitle

\begin{abstract}
In this study, we develop a stochastic optimal control approach with reinforcement learning structure to learn the unknown parameters appeared in the drift and diffusion terms of the stochastic differential equation. By choosing an appropriate cost functional, based on a classical optimal feedback control, we translate the original optimal control problem to a new control problem which takes place the unknown parameter as control, and the related optimal control can be used to estimate the unknown parameter. We establish the mathematical framework of the dynamic equation for the exploratory state, which is consistent with the existing results. Furthermore, we consider the linear stochastic differential equation case where the drift or diffusion term with unknown parameter. Then, we investigate the general case where both the drift and diffusion terms contain unknown parameters. For the above cases, we show that the optimal density function satisfies a Gaussian distribution which can be used to estimate the unknown parameter. Based on the obtained estimation of the parameters, we can do empirical analysis for a given model.
\end{abstract}

\noindent KEYWORDS: Parameters estimation; Stochastic optimal control; Reinforcement learning; Linear SDE; Exploratory

\section{Introduction}

\cite{Z23} commented that "We strive to seek optimality, but often find ourselves trapped in bad optimal, solutions that are either local optimizers, or are too rigid to leave any room for errors". Indeed, exploration through randomization brings a way to break this curse of optimality. In this present study, we try to balance the model based and model free methods, that is we aim to use the reinforcement learning structure to explore the unknown parameters appeared in the model based method.

Recently, \cite{WZZ20a} first considered the reinforcement learning in continuous time and spaces, and an exploratory formulation for the dynamics equation of the state and an entropy-regularized reward function were introduced. Then, \cite{WZZ20b} solved the continuous time mean variance portfolio selection problem under the reinforcement learning stochastic optimal control framework. \cite{TZZ22} considered the exploratory Hamilton-Jacobi-Bellman (HJB) equation formulated by \cite{WZZ20a} in the context of reinforcement learning in continuous time, and established the well-posedness and regularity of the viscosity solution to the HJB equation. \cite{GXZ22} considered the temperature control problem for Langevin diffusions in the context of nonconvex optimization under the regularized exploratory formulation developed by \cite{WZZ20a}. Differing from the continuous-time entropy-regularized reinforcement learning problem of \cite{WZZ20a}, \cite{HWZ23} proposed a Choquet regularizer to measure and manage the level of the exploration for reinforcement learning.

For the classical stochastic optimal control theory and related applications, we refer the readers to monographs \cite{FR75,YZ99,FM06}. For the recursive stochastic optimal control problem and related dynamic programming principle, see \cite{PP90,P90,P92}. Well-known that the classical optimal control theory is applied to solve the mean-variance model, see \cite{ZL00,BC10,BMZ14,DJKX20}. When we consider the applications of the classical optimal control problem, for example in the mean-variance investment portfolio problem, we first need to estimate the parameters appeared in the model. Some statistic methods could be employed such as moment estimation, maximum likelihood estimation which heavily depend on the stringent assumptions on the observed samples. Furthermore, based on the observations (historical data), it is difficult to estimate the time varying parameter appeared in the model based problems. The reinforcement learning algorithm has been widely used to study optimization, engineering and finance, etc. In particular, \cite{WZZ20a} first introduced the reinforcement learning in continuous time stochastic optimal control problem.

However, it is important to find a better estimation for the parameter appeared in the model. Based on the estimation of the parameter, we can do empirical and time trend analysis. Therefore, in this study, we first show how to use a stochastic optimal control approach with reinforcement learning structure to learn the unknown parameters appeared in the dynamic equation. We consider the following stochastic optimal control problem with unknown deterministic parameter $\beta(\cdot)$,
\begin{equation}\label{in-eq-sde}
\mathrm{d}X^u_s=b(\beta_s,X_s^u,u_s)\mathrm{d}s+\sigma(\beta_s,X_s^u,u_s)\mathrm{d}W_s,\ X^u_t=x,
\end{equation}
where $b,\sigma$ are given deterministic function, $W(\cdot)$ is a Brownian motion, and $u(\cdot)$ is an input control. In the input control $u(\cdot)$, we can observe the output state $X^u(\cdot)$. In the classical mean-variance investment portfolio framework, the parameter $\beta(\cdot)$ in (\ref{in-eq-sde}) can be used to describe the mean and volatility of risky asset. Thus, it is useful to obtain the estimation of the unknown parameter $\beta(\cdot)$.

Now, we show the details of the stochastic optimal control approach with reinforcement learning structure for learning the unknown parameters. \textbf{Step 1:} Based on equation (\ref{in-eq-sde}), by choosing an appropriate cost functional, we obtain a feedback optimal control $u^*(\cdot)$ which contains the unknown parameter $\beta(\cdot)$, denote by $u^*_s=u^*(\beta_s,s,X^*_s),\ t\leq s\leq T$. \textbf{Step 2:} We take place the unknown parameter $\beta(\cdot)$ in the optimal control $u^*(\cdot)$ as a new deterministic control $\rho(\cdot)$, and the input control is denoted by $u^{\rho}_s=u^{\rho}(\rho_s,s,X^{\rho}_s),\ t\leq s\leq T$. \textbf{Step 3:} We put the new control $u^{\rho}(\cdot)$ into equation (\ref{in-eq-sde}) and the related cost functional, and establish a new optimal control problem. Indeed, the optimal control of the new exploratory control problem can be used to estimate the unknown parameters. Therefore, we consider the linear stochastic differential equation case where the drift or diffusion term contains the unknown parameter, and the general case where both the drift and diffusion terms contain unknown parameters. For the above cases, we find that the optimal density function can be used to learn the unknown parameters.

In this present paper, we focus on develop the theory of a stochastic optimal control approach with reinforcement learning structure to learn the unknown parameters appeared in the equation (\ref{in-eq-sde}). For the application of this new theory, we leave it as the future work. However, we refer the reader to the following references for the study of the algorithm of the optimal control. A unified framework to study policy evaluation and the associated temporal difference methods in continuous time  was investigated in \cite{JZ22a}.  \cite{JZ22b} studied the policy gradient for reinforcement learning in continuous time and space under the regularized exploratory formulation developed by \cite{WZZ20a}.  \cite{D22} studied the optimal stopping problem under the exploratory framework, and established the related HJB equation and algorithm. Furthermore, \cite{JZ22c} introduced a q-learning in continuous time and space. For the policy evaluation, we follow \cite{D00} for learning the value function. An introduction of reinforcement learning could be found in \cite{SB18}, and the deep learning and related topic see \cite{GBC16}.

The main contributions of this study are threefold:

(i). To learn the unknown parameters appeared in the dynamic equation of the state, we first develop a stochastic optimal control approach with reinforcement learning structure.

(ii). By choosing an appropriate cost functional, based on a classical optimal feedback control, we translate the original optimal control problem into a new control problem, in which the related optimal control is used to estimate unknown parameter. Based on the obtained estimations of the parameters, we can do empirical analysis for a given model.

(iii). We consider the linear stochastic differential equation case where the drift or diffusion term contains unknown parameter, and the general case where both the drift and diffusion terms contain unknown parameters. We show that the optimal density function satisfies a Gaussian distribution which can be used to learn the unknown parameters.

The remainder of this study is organized as follows. In section \ref{sec:2}, we formulate the stochastic optimal control approach with reinforcement learning structure, and show how to learn the unknown parameter. Then, we investigate the exploratory HJB equation for the stochastic optimal control approach in Section \ref{sec:3}. In Section \ref{sec:4}, we consider the problem of the Linear SDE case, where the drift or diffusion term is allowed to contain the unknown parameter. Furthermore, in Section \ref{sec:5}, we general the model in Section \ref{sec:4} to the case where both the drift and diffusion terms contain the unknown parameters. Then, we give an example to verify the main results. Finally, we conclude this study in Section \ref{sec:6}.

\section{Formulation of the model}\label{sec:2}

Given a probability space $(\Omega, \mathcal{F},P)$, Brownian motion $\{W_t\}_{t\geq 0}$, and filtration $\{\mathcal{F}_t\}_{t\geq 0}$ generated by $\{W_t\}_{t\geq 0}$, where $\mathcal{F}=\mathcal{F}_T$ with a given terminal time $T$. We introduce the following stochastic differential equation with the deterministic unknown parameter $\beta_s\in \mathbb{R},\ t\leq s\leq T$, and control $u(\cdot)\in \mathcal{U}[t,T]$,
\begin{equation}\label{eq-sde}
\mathrm{d}X^u_s=b(\beta_s,X_s^u,u_s)\mathrm{d}s+\sigma(\beta_s,X_s^u,u_s)\mathrm{d}W_s,\ X^u_t=x,
\end{equation}
where $\mathcal{U}[t,T]$ is the set of all progressing measurable, square integrate processes on $[t,T]$, and the processes take value in a Euclidean space. Based on the state $\{X^u_s\}_{t\leq s\leq T}$, we consider the running and terminal cost functional,
\begin{equation}\label{cost-1}
J(t,x;u(\cdot))=\mathbb{E}\left[\int_t^Tf(X_s^u,u_s)\mathrm{d}s+\Phi(X_T^u)\right],
\end{equation}
and the target is to minimize $J(t,x;u(\cdot))$ over $u(\cdot)\in \mathcal{U}[t,T]$, that is
$$
\inf_{u(\cdot)\in \mathcal{U}[t,T]}J(t,x;u(\cdot)).
$$
The conditions on functions $b,\sigma,f$ and $\Phi$ in equations (\ref{eq-sde}) and (\ref{cost-1}) will be given later. Under mild conditions, we can show that there exists a feedback optimal control $u^*_s=u^*(\beta_s,s,X^*_s),\ t\leq s\leq T$ (see \cite{YZ99} for further details), where $X^*$ is the optimal state with the optimal control $u^*$.
\begin{remark}
Indeed, we can choose a cost functional $J(t,x;u(\cdot))$ such that there exists a feedback optimal control which contains the unknown parameter $\beta(\cdot)$. See Sections \ref{sec:4} and \ref{sec:5} for further details.
\end{remark}

Now, we give the assumptions of this study. The functions $b,\sigma,f$ and $\Phi$ are continuous on all the variables.
\begin{assumption}\label{ass-1}
The functions $b$ and $\sigma$ are linear growth and Lipschaitz continuous on the second and third variables with Lipschitz constant $L$.
\end{assumption}
\begin{assumption}\label{ass-2}
The functions $f(x,u)$ and $\Phi(x)$ are polynomial growth on $x$ and $u$.
\end{assumption}

\begin{assumption}\label{ass-3}
$\beta(\cdot)$ is a deterministic piecewise continuous function on $[t,T]$. The feedback optimal control $u^*(\beta_s,s,x),\ t\leq s\leq T$ is uniformly Lipschitz continuous on $x$, with Lipschitz constant $L$.
\end{assumption}

In practical analysis, we always assume the form of the functions $b,\sigma,f$ and $\Phi$ and leave room for estimating the unknown parameter $\beta(\cdot)$. Some classical statistics estimations are implemented, for example moment estimation, maximum likelihood estimation, etc. However, these estimation methods are heavily based on the observations. In this study, we aim to develop a new estimations method based on the explicit feedback control $u^*(\cdot)$.

It should be note that we can observe the value of the state $X^u(\cdot)$ with a given control $u(\cdot)$, but not the unknown parameter $\beta(\cdot)$. In the following, we develop the model used to estimate the parameter $\beta(\cdot)$. Since we don't know the value of the parameter $\beta(\cdot)$, we take place $\beta(\cdot)$ as a new deterministic control process $\rho(\cdot)$ in the feedback optimal control $u^*(\beta_s,s,X^*_s)$, and denotes as $u^{\rho}_s=u^{*}(\rho_s,s,X^{\rho}_s)$, where $X^{\rho}_s$ satisfies the following stochastic differential equation,
\begin{equation}\label{eq-sde-1}
\mathrm{d}X^{\rho}_s=\hat{b}(\beta_s,X_s^{\rho},\rho_s)\mathrm{d}s
+\hat{\sigma}(\beta_s,X_s^{\rho},{\rho}_s)\mathrm{d}W_s,\ X^{\rho}_t=x,
\end{equation}
where $\hat{b}(\beta_s,X_s^{\rho},\rho_s)={b}(\beta_s,X_s^{\rho},u^{\rho}_s)$ and $\hat{\sigma}(\beta_s,X_s^{\rho},\rho_s)={\sigma}(\beta_s,X_s^{\rho},u^{\rho}_s)$.
Based on the above given assumptions, we have the following unique results for the solution of equation (\ref{eq-sde-1}).
\begin{theorem}
Let Assumptions \ref{ass-1} and \ref{ass-3} hold. The equation (\ref{eq-sde-1}) admits a unique solution.
\end{theorem}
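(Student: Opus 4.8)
The plan is to recognize \eqref{eq-sde-1} as a classical It\^o stochastic differential equation whose drift and diffusion coefficients are the compositions $\hat b(\beta_s,x,\rho_s)=b(\beta_s,x,u^{*}(\rho_s,s,x))$ and $\hat\sigma(\beta_s,x,\rho_s)=\sigma(\beta_s,x,u^{*}(\rho_s,s,x))$, and then to verify that these composed coefficients inherit the two structural properties the standard well-posedness theorem requires: Lipschitz continuity and linear growth in the state variable, together with suitable measurability in $s$. Once these are in place, existence and uniqueness of a unique $\{\mathcal F_s\}$-adapted, square-integrable (indeed strong) solution on $[t,T]$ follows from the classical theory (see, e.g., \cite{YZ99,FR75}).

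First I would establish the Lipschitz estimate in $x$. Fix $s\in[t,T]$ and states $x,y$. By Assumption \ref{ass-1}, $b$ is Lipschitz in its second and third arguments with constant $L$, and by Assumption \ref{ass-3} the feedback map $x\mapsto u^{*}(\rho_s,s,x)$ is Lipschitz in $x$ with constant $L$, uniformly in $s$. Composing,
\[
|\hat b(\beta_s,x,\rho_s)-\hat b(\beta_s,y,\rho_s)|\le L\big(|x-y|+|u^{*}(\rho_s,s,x)-u^{*}(\rho_s,s,y)|\big)\le L(1+L)|x-y|,
\]
and the identical bound holds for $\hat\sigma$. Thus $\hat b$ and $\hat\sigma$ are uniformly Lipschitz in the state.

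Next I would check linear growth in $x$ and measurability in $s$. From the Lipschitz property, $|u^{*}(\rho_s,s,x)|\le|u^{*}(\rho_s,s,0)|+L|x|$, so it suffices to bound $\sup_{s\in[t,T]}|u^{*}(\rho_s,s,0)|$. Since $\beta(\cdot)$ — and hence the deterministic control $\rho(\cdot)$ that takes its place — is piecewise continuous on the compact interval $[t,T]$ (Assumption \ref{ass-3}), $\rho$ is bounded, and by the continuity of the feedback control in its arguments, $u^{*}(\rho_s,s,0)$ is bounded on $[t,T]$, say by $C$. Combining this with the linear growth of $b,\sigma$ in their second and third variables (Assumption \ref{ass-1}) and the boundedness of $\beta$ yields $|\hat b(\beta_s,x,\rho_s)|+|\hat\sigma(\beta_s,x,\rho_s)|\le C'(1+|x|)$ for a constant $C'$ depending only on $L$, $C$ and the sup-norms of $\beta$ and $\rho$. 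Measurability of $s\mapsto\hat b(\beta_s,x,\rho_s)$ and $s\mapsto\hat\sigma(\beta_s,x,\rho_s)$ for fixed $x$ is immediate from the piecewise continuity of $\beta,\rho$ and the continuity of $b,\sigma,u^{*}$.

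With the Lipschitz and linear growth conditions on $\hat b,\hat\sigma$ in hand, the classical existence–uniqueness theorem for SDEs with deterministic time-dependent coefficients (\cite{YZ99,FR75}) gives the unique solution $\{X^{\rho}_s\}_{t\le s\le T}$, and one may also record the standard estimate $\mathbb E[\sup_{t\le s\le T}|X^{\rho}_s|^{2}]<\infty$. The main obstacle is not the Lipschitz step, which is a routine composition, but the uniform-in-$s$ control of the feedback map at the origin needed for linear growth: this is precisely where one must invoke the compactness of $[t,T]$, the boundedness of the deterministic parameter/control, and the regularity of $u^{*}$ supplied by the verification theorem that produced it. If $u^{*}$ were only measurable rather than continuous in $(\rho,s)$, one would instead have to impose directly on the admissible class of $\rho$ that $\sup_{s\in[t,T]}|u^{*}(\rho_s,s,0)|<\infty$.
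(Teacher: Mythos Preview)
Your argument is correct and is precisely the natural route: verify that the composed coefficients $\hat b,\hat\sigma$ inherit Lipschitz continuity in the state from Assumptions~\ref{ass-1} and~\ref{ass-3}, check linear growth and measurability in time, and invoke the classical strong existence--uniqueness theorem. The paper itself states this theorem without proof, treating it as an immediate consequence of the assumptions, so there is nothing to compare against; your write-up simply fills in the standard details the author chose to omit. The one point you rightly flag---that boundedness of $s\mapsto u^{*}(\rho_s,s,0)$ requires some regularity of $u^{*}$ in $(\rho,s)$ beyond what Assumption~\ref{ass-3} literally provides---is a genuine technical caveat, but in the paper's setting (where $u^{*}$ arises from a verification theorem and $\rho\in\mathcal D[t,T]$ is piecewise continuous on a compact interval) it is a harmless one.
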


Furthermore, we rewrite the cost functional (\ref{cost-1}) as follows,
\begin{equation}\label{cost-2}
\hat{J}(t,x;\rho(\cdot))=\mathbb{E}\left[\int_t^T\hat{f}(X_s^{\rho},\rho_s)\mathrm{d}s+\hat{\Phi}(X_T^{\rho})\right],
\end{equation}
where $\rho_s\in \mathbb{R}$, $\hat{f}(X_s^{\rho},\rho_s)=f(X_s^{\rho},u^{\rho}_s)$, and $\hat{\Phi}(X_T^{\rho})={\Phi}(X_T^{\rho})$, $t\leq s\leq T$. We denote $\mathcal{D}[t,T]$=\{all deterministic piecewise continuous functions on $[t,T]$\}. Obviously, we have the following equivalence between the cost functional $J(t,x;u(\cdot))$ and $\hat{J}(t,x;\rho(\cdot))$.
\begin{theorem}\label{the-1}
Let Assumption \ref{ass-3} hold, $J(t,x;u(\cdot))$ be the cost functional given in (\ref{cost-1}), and $J(t,x;\rho(\cdot))$ be given in (\ref{cost-2}). Then, we have
$$
\inf_{\rho(\cdot)\in\mathcal{D}[t,T]}\hat{J}(t,x;\rho(\cdot))=\inf_{u(\cdot)\in \mathcal{U}[t,T]}J(t,x;u(\cdot)).
$$
\end{theorem}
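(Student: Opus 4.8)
The plan is to establish separately the two inequalities between $\inf_{\rho(\cdot)\in\mathcal{D}[t,T]}\hat J(t,x;\rho(\cdot))$ and $\inf_{u(\cdot)\in\mathcal U[t,T]}J(t,x;u(\cdot))$. The ``$\geq$'' direction follows because every admissible $\rho$ in the new problem generates, through the feedback map $u^{*}$, an admissible control for the original problem with exactly the same cost; the ``$\leq$'' direction follows by choosing the new control to be the \emph{true} parameter $\beta(\cdot)$ and identifying the resulting state with the optimal state $X^{*}$ via the uniqueness theorem for (\ref{eq-sde-1}).

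For the ``$\geq$'' direction, fix $\rho(\cdot)\in\mathcal D[t,T]$, let $X^{\rho}$ be the unique solution of (\ref{eq-sde-1}) furnished by the preceding theorem, and set $u^{\rho}_s=u^{*}(\rho_s,s,X^{\rho}_s)$. First I would check that $u^{\rho}(\cdot)\in\mathcal U[t,T]$: progressive measurability is immediate since $\rho(\cdot)$ is deterministic and piecewise continuous, $u^{*}(\cdot,\cdot,x)$ is continuous (indeed Lipschitz) in $x$ by Assumption \ref{ass-3}, and $X^{\rho}$ is continuous and adapted; square integrability follows from the linear growth of $\hat b,\hat\sigma$ --- a consequence of Assumption \ref{ass-1} together with the at-most-linear growth of $u^{*}$ in $x$ implied by Assumption \ref{ass-3} --- which gives the standard a priori bound $\mathbb E\big[\sup_{t\leq s\leq T}|X^{\rho}_s|^{2}\big]<\infty$ and hence $\mathbb E\int_t^T|u^{\rho}_s|^{2}\,\mathrm{d}s<\infty$. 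Since $\hat b(\beta_s,\cdot,\rho_s)=b(\beta_s,\cdot,u^{\rho}_s)$ and $\hat\sigma(\beta_s,\cdot,\rho_s)=\sigma(\beta_s,\cdot,u^{\rho}_s)$, the process $X^{\rho}$ is precisely the state of (\ref{eq-sde}) driven by $u^{\rho}(\cdot)$; combined with $\hat f(x,\rho_s)=f(x,u^{\rho}_s)$ and $\hat\Phi=\Phi$ this yields $\hat J(t,x;\rho(\cdot))=J(t,x;u^{\rho}(\cdot))\geq\inf_{u(\cdot)\in\mathcal U[t,T]}J(t,x;u(\cdot))$. Taking the infimum over $\rho(\cdot)\in\mathcal D[t,T]$ gives this inequality.

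For the ``$\leq$'' direction, recall that the original problem admits a feedback optimal control $u^{*}_s=u^{*}(\beta_s,s,X^{*}_s)$, so $J(t,x;u^{*}(\cdot))=\inf_{u(\cdot)\in\mathcal U[t,T]}J(t,x;u(\cdot))$, where $X^{*}$ solves the closed-loop equation $\mathrm{d}X^{*}_s=b(\beta_s,X^{*}_s,u^{*}(\beta_s,s,X^{*}_s))\,\mathrm{d}s+\sigma(\beta_s,X^{*}_s,u^{*}(\beta_s,s,X^{*}_s))\,\mathrm{d}W_s$ with $X^{*}_t=x$. Now take the particular control $\rho(\cdot)=\beta(\cdot)$, which lies in $\mathcal D[t,T]$ by Assumption \ref{ass-3}. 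Then (\ref{eq-sde-1}) with $\rho=\beta$ reduces to exactly this closed-loop equation, so by the uniqueness part of the preceding theorem (which uses Assumptions \ref{ass-1} and \ref{ass-3}) we get $X^{\beta}_s=X^{*}_s$ for all $s\in[t,T]$ almost surely, hence $u^{\beta}_s=u^{*}_s$ almost surely. Therefore $\hat J(t,x;\beta(\cdot))=J(t,x;u^{*}(\cdot))=\inf_{u(\cdot)\in\mathcal U[t,T]}J(t,x;u(\cdot))$, and a fortiori $\inf_{\rho(\cdot)\in\mathcal D[t,T]}\hat J(t,x;\rho(\cdot))\leq\inf_{u(\cdot)\in\mathcal U[t,T]}J(t,x;u(\cdot))$. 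Combining the two inequalities proves the theorem.

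I expect the only genuinely technical point to be the admissibility check $u^{\rho}(\cdot)\in\mathcal U[t,T]$ in the first part --- in particular the square-integrability --- which requires chaining the linear-growth bound for $b,\sigma$ (Assumption \ref{ass-1}) with the Lipschitz control of $u^{*}$ in $x$ (Assumption \ref{ass-3}) and a Gronwall-type moment estimate for $X^{\rho}$; finiteness of the running and terminal costs is then covered by the polynomial-growth Assumption \ref{ass-2}. Everything else is a direct bookkeeping consequence of the definitions of $\hat b,\hat\sigma,\hat f,\hat\Phi$, the already-established well-posedness of (\ref{eq-sde-1}), and the optimality of the feedback control $u^{*}$ for the original problem.
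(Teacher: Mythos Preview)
Your proposal is correct and follows essentially the same two-inequality strategy as the paper: for ``$\geq$'' you show that each $\rho(\cdot)\in\mathcal D[t,T]$ induces an admissible $u^{\rho}(\cdot)\in\mathcal U[t,T]$ with identical cost, and for ``$\leq$'' you plug in $\rho(\cdot)=\beta(\cdot)$ and identify the resulting state with the optimal $X^{*}$. The paper's own proof is terser---it simply asserts the admissibility $u^{\rho}(\cdot)\in\mathcal U[t,T]$ from Assumption~\ref{ass-3} without spelling out the square-integrability argument via moment bounds, and in the second direction it does not explicitly invoke uniqueness of~(\ref{eq-sde-1})---so your version supplies exactly the details the paper leaves implicit.
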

\begin{proof}
Note that, $\rho(\cdot)\in\mathcal{D}[t,T]$ is a deterministic piecewise continuous process. From Assumption \ref{ass-3}, it is easily to verify that $u^{\rho}(\cdot)=u^{*}(\rho(\cdot),\cdot,X^{\rho}(\cdot))\in \mathcal{U}[t,T]$. Thus,
$$
\inf_{\rho(\cdot)\in\mathcal{D}[t,T]}\hat{J}(t,x;\rho(\cdot))\geq \inf_{u(\cdot)\in \mathcal{U}[t,T]}J(t,x;u(\cdot)).
$$

On the contrary, since $u^*(\cdot)=u^{*}(\beta(\cdot),\cdot,X^{\rho}(\cdot))\in \mathcal{D}[t,T]$ and
$$
J(t,x;u^*(\cdot))=\inf_{u(\cdot)\in \mathcal{U}[t,T]}J(t,x;u(\cdot)),
$$
thus, we have
$$
\inf_{\rho(\cdot)\in\mathcal{D}[t,T]}\hat{J}(t,x;\rho(\cdot))\leq \inf_{u(\cdot)\in \mathcal{U}[t,T]}J(t,x;u(\cdot)).
$$
This completes the proof.
\end{proof}
\begin{remark}
Based on Theorem \ref{the-1}, if the cost functional $\hat{J}(t,x;\rho(\cdot))$ admits a unique optimal control $\rho^*(\cdot)$, we have that $\beta(\cdot)=\rho^*(\cdot)$, which means that we can use the above new optimal control problem to estimate the parameter in the original optimal control problem. Indeed, we can choose a cost functional which satisfies the above inference. Further details can be found in Sections \ref{sec:4} and \ref{sec:5}.
\end{remark}

From equation (\ref{eq-sde-1}), we can observe the value of the state $X^{\rho}(\cdot)$ based on an input control $\rho(\cdot)$. Following the idea investigated in \cite{WZZ20a}, we use reinforcement learning (RL) structure to learn the optimal control $\rho^*(\cdot)$. Motivated with the law of large numbers of samples of state under the density function $\pi(\cdot)$, \cite{WZZ20a} introduced the following exploratory dynamic equation for state $X^{\pi}(\cdot)$,
\begin{equation}\label{eq-sde-2}
\mathrm{d}X^{\pi}_s=\tilde{b}(\beta_s,X_s^{\pi},\pi_s)\mathrm{d}s
+\tilde{\sigma}(\beta_s,X_s^{\pi},{\pi}_s)\mathrm{d}W_s,\ X^{\pi}_t=x,
\end{equation}
where $\pi_s$ is the density function of the input control $\rho_s$,
$$
\tilde{b}(\beta_s,x,\pi_s)=\int_{\mathbb{R}}\hat{b}(\beta_s,x,\rho)\pi_s(\rho)\mathrm{d}\rho,
$$
and
$$
\tilde{\sigma}(\beta_s,x,{\pi}_s)=
\sqrt{\int_{\mathbb{R}}\hat{\sigma}^2(\beta_s,x,\rho)\pi_s(\rho)\mathrm{d}\rho},
$$
for $t\leq s\leq T$.

It should be note that, the observations of the exploratory state $X^{\pi}(\cdot)$ should satisfy the equation (\ref{eq-sde-1}). Based on this fact, we can prove that $X^{\pi}(\cdot)$ satisfies equation (\ref{eq-sde-2}).
\begin{theorem}\label{the-rep}
Let Assumptions \ref{ass-1}, \ref{ass-2}, and \ref{ass-3} hold, and the state $X^{\pi}(\cdot)$ satisfies a diffusion process. Then, $X^{\pi}(\cdot)$ satisfies equation (\ref{eq-sde-2}).
\end{theorem}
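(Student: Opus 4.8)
The plan is to identify the infinitesimal conditional mean and second moment of the increments of $X^{\pi}(\cdot)$ in two ways --- once from the law of large numbers, using that observations of $X^{\pi}$ are generated by (\ref{eq-sde-1}) with the control $\rho$ drawn according to $\pi$, and once from the hypothesis that $X^{\pi}$ is a diffusion --- and then to match the two.

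First I would fix $t\le s<s+\Delta\le T$, condition on $\mathcal{F}_{s}$, and write $X^{\pi}_{s}=x$. For $N\ge 1$, let $W^{(1)},\dots,W^{(N)}$ be independent Brownian motions and, independently, let $\rho^{(1)},\dots,\rho^{(N)}$ be i.i.d.\ with density $\pi_{s}$; let $X^{(i)}$ solve (\ref{eq-sde-1}) on $[s,s+\Delta]$ with $X^{(i)}_{s}=x$, control (frozen at) $\rho^{(i)}$, and driving noise $W^{(i)}$. By Assumptions \ref{ass-1} and \ref{ass-3} the coefficients $\hat b(\beta_{\cdot},\cdot,\rho)$, $\hat\sigma(\beta_{\cdot},\cdot,\rho)$ have linear growth, are Lipschitz in the state, and $\beta$ is piecewise continuous, so the standard It\^o--Taylor estimates (together with the usual moment bounds for (\ref{eq-sde-1})) give, conditionally on $\rho^{(i)}$,
\begin{equation*}
\mathbb{E}\big[X^{(i)}_{s+\Delta}-x\big]=\hat b(\beta_{s},x,\rho^{(i)})\,\Delta+o(\Delta),\qquad
\mathbb{E}\big[(X^{(i)}_{s+\Delta}-x)^{2}\big]=\hat\sigma^{2}(\beta_{s},x,\rho^{(i)})\,\Delta+o(\Delta),
\end{equation*}
uniformly in $i$. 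Averaging over $i$ and letting $N\to\infty$, the law of large numbers yields
\begin{equation*}
\frac1N\sum_{i=1}^{N}\hat b(\beta_{s},x,\rho^{(i)})\longrightarrow\int_{\mathbb{R}}\hat b(\beta_{s},x,\rho)\pi_{s}(\rho)\,\mathrm{d}\rho=\tilde b(\beta_{s},x,\pi_{s}),\qquad
\frac1N\sum_{i=1}^{N}\hat\sigma^{2}(\beta_{s},x,\rho^{(i)})\longrightarrow\tilde\sigma^{2}(\beta_{s},x,\pi_{s}).
\end{equation*}
Since observations of $X^{\pi}$ are to be read off this pooled population, its increments satisfy
$$
\mathbb{E}\big[X^{\pi}_{s+\Delta}-X^{\pi}_{s}\,\big|\,\mathcal{F}_{s}\big]=\tilde b(\beta_{s},X^{\pi}_{s},\pi_{s})\,\Delta+o(\Delta),\qquad
\mathbb{E}\big[(X^{\pi}_{s+\Delta}-X^{\pi}_{s})^{2}\,\big|\,\mathcal{F}_{s}\big]=\tilde\sigma^{2}(\beta_{s},X^{\pi}_{s},\pi_{s})\,\Delta+o(\Delta).
$$

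On the other hand, by hypothesis $X^{\pi}$ is a diffusion, say $\mathrm{d}X^{\pi}_{s}=\mu(s,X^{\pi}_{s})\,\mathrm{d}s+\nu(s,X^{\pi}_{s})\,\mathrm{d}W_{s}$; then It\^o's formula gives $\mathbb{E}[X^{\pi}_{s+\Delta}-X^{\pi}_{s}\mid\mathcal{F}_{s}]=\mu(s,X^{\pi}_{s})\Delta+o(\Delta)$ and $\mathbb{E}[(X^{\pi}_{s+\Delta}-X^{\pi}_{s})^{2}\mid\mathcal{F}_{s}]=\nu^{2}(s,X^{\pi}_{s})\Delta+o(\Delta)$. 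Dividing by $\Delta$ and letting $\Delta\downarrow 0$ in both representations forces $\mu(s,X^{\pi}_{s})=\tilde b(\beta_{s},X^{\pi}_{s},\pi_{s})$ and $\nu^{2}(s,X^{\pi}_{s})=\tilde\sigma^{2}(\beta_{s},X^{\pi}_{s},\pi_{s})$ for a.e.\ $s$. Since $\tilde\sigma$ is by definition the nonnegative square root and the sign of the diffusion coefficient can be absorbed into the driving Brownian motion, we may take $\nu=\tilde\sigma$, so $X^{\pi}$ solves (\ref{eq-sde-2}).

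The main obstacle is making the two limiting operations --- $N\to\infty$ (law of large numbers) and $\Delta\downarrow0$ (It\^o--Taylor expansion) --- rigorous and, above all, interchanging them: one needs moment estimates for the $X^{(i)}$ that are uniform in $i$ and that control the $o(\Delta)$ terms uniformly, and one must justify replacing $\hat b(\beta_{r},X^{(i)}_{r},\rho^{(i)})$ by $\hat b(\beta_{s},x,\rho^{(i)})$ inside the average without the error surviving the sum over $i$; this is exactly where the linear-growth and Lipschitz hypotheses in Assumptions \ref{ass-1} and \ref{ass-3}, the polynomial-growth control of Assumption \ref{ass-2}, and the piecewise continuity of $\beta$ enter. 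A secondary subtlety deserving care is that, taken literally, the empirical \emph{mean} of the $X^{(i)}_{s+\Delta}$ is asymptotically deterministic (its martingale part has variance $O(1/N)$), so the Brownian term in (\ref{eq-sde-2}) is recovered not from the average itself but only through the matching of the second-moment (equivalently, quadratic-variation) functional --- which is precisely why the statement assumes, rather than derives, that $X^{\pi}$ is a genuine diffusion.
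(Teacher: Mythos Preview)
Your proposal is correct and follows essentially the same approach as the paper: identify the drift via the first (conditional) moment of the increment using the law of large numbers over i.i.d.\ samples $(W^{(i)},\rho^{(i)})$, identify the squared diffusion via the second moment, and match both against the assumed diffusion form, absorbing the sign ambiguity of $\nu$ into the Brownian motion. The only cosmetic differences are that the paper upgrades the expectation identity to a $P$-a.s.\ one by testing against indicators $1_A$ with $A\in\mathcal{F}_s$ rather than by conditioning, and it extracts $g^2$ by applying It\^o's formula to $(X^{\pi})^2$ and rerunning the drift argument instead of expanding $\mathbb{E}[(X^{\pi}_{s+\Delta}-X^{\pi}_s)^2\mid\mathcal{F}_s]$ directly.
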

\begin{proof}
We assume that the state $X^{\pi}(\cdot)$ satisfies the following diffusion process,
\begin{equation}\label{eq-sde-20}
\mathrm{d}X^{\pi}_s=h(\beta_s,X_s^{\pi},\pi_s)\mathrm{d}s
+g(\beta_s,X_s^{\pi},{\pi}_s)\mathrm{d}W_s,\ X^{\pi}_t=x,
\end{equation}
where the functions $h$ and $g$ are needed to be determined.

Let us consider a sequence of samples $\{x^i(\cdot)\}_{i=1}^N$ which are the observations of $X^{\pi}(\cdot)$ with $\{W^i(\cdot)\}_{i=1}^N$ and $\{\rho^i(\cdot)\}_{i=1}^N$, where $W^i(\cdot)$ is the independent sample paths of the Brownian motion $W(\cdot)$, and $\rho^i(\cdot)$ is the control which obeys the density function $\pi(\cdot)$, $1\leq i\leq N$. For given $t\leq s\leq T$ and sufficiently small $\Delta s>0$, $x^i(\cdot)$ satisfies
\begin{equation}\label{eq-sde-3}
x^{i}_{s+\Delta s}-x^i_s=\int_s^{s+\Delta s}\hat{b}(\beta_r,x_r^{i},\rho^i_r)\mathrm{d}r
+\int_s^{s+\Delta s}\hat{\sigma}(\beta_r,x^i_r,{\rho}^i_r)\mathrm{d}W^i_r,\ x^{i}_t=x,
\end{equation}
which leads to
\begin{align*}
x^{i}_{s+\Delta s}-x^i_s=&\hat{b}(\beta_s,x_s^{i},\rho^i_s)\Delta s+\int_s^{s+\Delta s}\hat{\sigma}(\beta_r,x^i_r,{\rho}^i_r)\mathrm{d}W^i_r\\
&+\int_s^{s+\Delta s}\left[\hat{b}(\beta_r,x_r^{i},\rho^i_r)-\hat{b}(\beta_s,x_s^{i},\rho^i_s)\right]\mathrm{d}r.
\end{align*}
Applying the classical law of larger numbers, we have the following convergence results in probability:
\begin{align*}
&\lim_{N\to\infty}\frac{1}{N}\left(x^{i}_{s+\Delta s}-x^i_s\right)=\mathbb{E}[X^{\pi}_{s+
\Delta s}-X^{\pi}_s];\\
&\lim_{N\to\infty}\frac{1}{N}\hat{b}(\beta_s,x_s^{i},\rho^i_s)\Delta s=\mathbb{E}\left[\int_{\mathbb{R}}\hat{b}(\beta_s,X^{\pi}_s,\rho)\pi_s(\rho)\mathrm{d}\rho\Delta s\right];\\
&\lim_{N\to\infty}\frac{1}{N}\int_s^{s+\Delta s}\hat{\sigma}(\beta_r,x^i_r,{\rho}^i_r)\mathrm{d}W^i_r=0;\\
&\lim_{N\to\infty}\frac{1}{N}\int_s^{s+\Delta s}\left[\hat{b}(\beta_r,x_r^{i},\rho^i_r)-\hat{b}(\beta_s,x_s^{i},\rho^i_s)\right]\mathrm{d}r=o(\Delta s),
\end{align*}
where the last equality is obtained by the continuous dependence of stochastic differential equation, $o(\Delta s)$ is the higher order infinitesimal of $\Delta s$, and thus,
\begin{equation}\label{the-rep-equa-1}
\mathbb{E}[X^{\pi}_{s+
\Delta s}-X^{\pi}_s]=\mathbb{E}\left[\int_{\mathbb{R}}\hat{b}(\beta_s,X^{\pi}_s,\rho)\pi_s(\rho)\mathrm{d}\rho\Delta s\right]+o(\Delta s).
\end{equation}
Dividing on both sides of equation (\ref{the-rep-equa-1}) by $\Delta s$, and letting $\Delta s\to 0$, it follows that
\begin{equation}\label{the-rep-equa-2}
\lim_{\Delta s\to 0}\frac{\mathbb{E}[X^{\pi}_{s+
\Delta s}-X^{\pi}_s]}{\Delta s}=\mathbb{E}\left[\int_{\mathbb{R}}\hat{b}(\beta_s,X^{\pi}_s,\rho)\pi_s(\rho)\mathrm{d}\rho\right].
\end{equation}
Combining equations (\ref{eq-sde-20}) and (\ref{the-rep-equa-2}), we have that
\begin{equation}\label{the-rep-equa-3}
\mathbb{E}\left[h(\beta_s,X^{\pi}_s,\pi_s)\right]
=\mathbb{E}\left[\int_{\mathbb{R}}\hat{b}(\beta_s,X^{\pi}_s,\rho)\pi_s(\rho)\mathrm{d}\rho\right].
\end{equation}

It is worth noting that, for any given $A\in \mathcal{F}_s$, from equations (\ref{eq-sde-20}) and (\ref{eq-sde-3}), we have
$$
\mathrm{d}X^{\pi}_s1_{A}=h(\beta_s,X_s^{\pi}1_A,\pi_s)1_A\mathrm{d}s
+g(\beta_s,X_s^{\pi}1_A,{\pi}_s)1_A\mathrm{d}W_s,
$$
and
$$
x^{i}_{s+\Delta s}1_{A^i}-x^i_s1_{A^i}=\int_s^{s+\Delta s}\hat{b}(\beta_r,x_r^{i}1_{A^i},\rho^i_r)1_{A^i}\mathrm{d}r
+\int_s^{s+\Delta s}\hat{\sigma}(\beta_r,x^i_r1_{A^i},{\rho}^i_r)1_{A^i}\mathrm{d}W^i_r,
$$
where $1_A$ is the indicator function on set $A$, and $A^i$ is measurable about the filtration generated by $W^i_r,\ t\leq r\leq s$. Then, similar with the above proof and equation (\ref{the-rep-equa-3}), we have
\begin{equation}\label{the-rep-equa-4}
\mathbb{E}\left[h(\beta_s,X^{\pi}_s,\pi_s)1_A\right]
=\mathbb{E}\left[\int_{\mathbb{R}}\hat{b}(\beta_s,X^{\pi}_s,\rho)\pi_s(\rho)\mathrm{d}\rho1_A\right],
\end{equation}
and thus
\begin{equation}\label{the-rep-equa-5}
h(\beta_s,X^{\pi}_s,\pi_s)=\int_{\mathbb{R}}\hat{b}(\beta_s,X^{\pi}_s,\rho)\pi_s(\rho)\mathrm{d}\rho,\quad P-a.s.
\end{equation}

Applying It\^{o} formula to $(X^{\pi}_s)^2$, it follows that
\begin{equation}\label{eq-sde-21}
\mathrm{d}(X^{\pi}_s)^2=\left[2X^{\pi}_sh(\beta_s,X_s^{\pi},\pi_s)+g^2(\beta_s,X_s^{\pi},{\pi}_s)\right]\mathrm{d}s
+2X^{\pi}_sg(\beta_s,X_s^{\pi},{\pi}_s)\mathrm{d}W_s.
\end{equation}
Now, similar with the proof in equation (\ref{the-rep-equa-5}), we consider the samples $\{(x^i(\cdot))^2\}_{i=1}^N$, and obtain that
\begin{equation}\label{the-rep-equa-6}
2X^{\pi}_sh(\beta_s,X_s^{\pi},\pi_s)+g^2(\beta_s,X_s^{\pi},{\pi}_s)
=\int_{\mathbb{R}}\left[2X^{\pi}_s\hat{b}(\beta_s,X_s^{\pi},\rho)
+\hat{\sigma}^2(\beta_s,X_s^{\pi},\rho)\right]\pi_s(\rho)\mathrm{d}\rho.
\end{equation}
Combining equations (\ref{the-rep-equa-5}) and (\ref{the-rep-equa-6}), we have
\begin{equation}\label{the-rep-equa-7}
g^2(\beta_s,X_s^{\pi},{\pi}_s)
=\int_{\mathbb{R}}\hat{\sigma}^2(\beta_s,X_s^{\pi},\rho)\pi_s(\rho)\mathrm{d}\rho,\quad P-a.s.
\end{equation}
Thus, we can choose
$$
g(\beta_s,X_s^{\pi},{\pi}_s)
=\sqrt{\int_{\mathbb{R}}\hat{\sigma}^2(\beta_s,X_s^{\pi},\rho)\pi_s(\rho)\mathrm{d}\rho},
$$
which completes the proof.
\end{proof}

\begin{remark}\label{remk-1}
In Theorem \ref{the-rep}, we show that the state $X^{\pi}(\cdot)$ satisfies equation (\ref{eq-sde-2}). Furthermore, from equation (\ref{the-rep-equa-7}), for $t\leq s\leq T$, we have that
$$
g(\beta_s,X_s^{\pi},{\pi}_s)=\sqrt{\int_{\mathbb{R}}\hat{\sigma}^2(\beta_s,X_s^{\pi},\rho)\pi_s(\rho)\mathrm{d}\rho},
$$
or
$$
g(\beta_s,X_s^{\pi},{\pi}_s)=-\sqrt{\int_{\mathbb{R}}\hat{\sigma}^2(\beta_s,X_s^{\pi},\rho)\pi_s(\rho)\mathrm{d}\rho}.
$$
Indeed, the Brownian motion $W_s$ satisfies the normal distribution with mean $0$ and variance $s$. Thus, we just need to choose one of them.
\end{remark}

Furthermore, in the framework of exploratory, we rewrite the cost functional (\ref{cost-2}) as follows,
\begin{equation}\label{cost-3}
\tilde{J}(t,x;\pi(\cdot))=
\mathbb{E}\left[\int_t^T\int_{\mathbb{R}}\left[\hat{f}(X_s^{\pi},\rho)\pi_s(\rho)
+\lambda\pi_s(\rho)\ln\pi_s(\rho)\right]\mathrm{d}
\rho\mathrm{d}s+\hat{\Phi}(X_T^{\pi})\right],
\end{equation}
where the term
$$
\lambda\int_{\mathbb{R}}\pi_s(\rho)\ln\pi_s(\rho)\mathrm{d}
\rho
$$
denotes the Shanon's differential entropy used to measure the level of exploratory, and $\lambda>0$ is the temperature constant balanced the exploitation and exploration. We denote all the admissible density functions control on $\mathcal{D}[t,T]$ as $\mathcal{A}[t,T]$. Thus, our target is to minimum the cost functional (\ref{cost-3}) over $\mathcal{A}[t,T]$.

\section{Hamilton-Jacobi-Bellman Approach}\label{sec:3}
Now, we consider equation (\ref{eq-sde-2}) with cost functional (\ref{cost-3}). Similar with the manner in \cite{WZZ20a}, employing the classical dynamic programming principle, one obtains that
$$
v(t,x)=\inf_{\pi(\cdot)\in \mathcal{A}[t,s]}\mathbb{E}\left[\int_t^s\int_{\mathbb{R}}\left[\hat{f}(X_r^{\pi},\rho)\pi_r(\rho)
+\lambda\pi_r(\rho)\ln\pi_r(\rho)\right]\mathrm{d}
\rho\mathrm{d}r+v(s,X^{\pi}_s)\right],
$$
and the related Hamilton-Jacobi-Bellman (HJB) equation is
\begin{align}
0=&\min_{\pi_t\in\mathcal{A}[t,t]}\bigg[\partial_tv(t,x)+
\int_{\mathbb{R}}\bigg(\hat{f}(x,\rho)+\lambda\ln\pi_t(\rho) \nonumber \\
&+\frac{1}{2}\hat{\sigma}^2(\beta_t,x,\rho)\partial_{xx}v(t,x)
+\hat{b}(\beta_t,x,\rho)\partial_xv(t,x)\bigg)\pi_t(\rho) \mathrm{d}\rho\bigg], \label{hjb-1}
\end{align}
with terminal condition $v(T,x)=\hat{\Phi}(x)$.

It is worth noting that, equation (\ref{hjb-1}) has an optimal control $\pi^*_t(\cdot)$ which satisfies
\begin{equation}\label{optim-1}
\pi^*_t(\rho)=\frac{\exp\left(-\frac{1}{\lambda}\left(\hat{f}(x,\rho)
+\frac{1}{2}\hat{\sigma}^2(\beta_t,x,\rho)\partial_{xx}v(t,x)
+\hat{b}(\beta_t,x,\rho)\partial_xv(t,x) \right) \right)}{\int_{\mathbb{R}}\exp\left(-\frac{1}{\lambda}\left(\hat{f}(x,u)
+\frac{1}{2}\hat{\sigma}^2(\beta_t,x,u)\partial_{xx}v(t,x)
+\hat{b}(\beta_t,x,u)\partial_xv(t,x) \right) \right)\mathrm{d}u}.
\end{equation}
Letting $\lambda\to 0$, $\pi^*_t(\cdot)$  reduces to a Dirac measure, where
$$
\pi^*_t(\rho)=\delta_{\rho^*_t}(\rho),\quad \rho\in \mathbb{R},
$$
and $\rho_t^*$ solves the following equation of $\rho$,
\begin{equation}\label{optim-2}
0=\partial_{\rho}\hat{f}(x,\rho)+\hat{\sigma}(\beta_t,x,\rho)
\partial_{\rho}\hat{\sigma}(\beta_t,x,\rho)\partial_{xx}v(t,x)
+\partial_{\rho}\hat{b}(\beta_t,x,\rho)\partial_xv(t,x).
\end{equation}
By Theorem \ref{the-1}, we have that equation (\ref{optim-2}) has at least one solution $\rho_t^*=\beta_t$, and thus
$$
0=\partial_{\rho}\hat{f}(x,\beta_t)+\hat{\sigma}(\beta_t,x,\beta_t)
\partial_{\rho}\hat{\sigma}(\beta_t,x,\beta_t)\partial_{xx}v(t,x)
+\partial_{\rho}\hat{b}(\beta_t,x,\beta_t)\partial_xv(t,x).
$$
Furthermore, the function of $\rho$, $\mathcal{L}(t,x,\beta_t,\rho)$ takes the minimum value at $\beta_t$, where
$$
\mathcal{L}(t,x,\beta_t,\rho)=\hat{f}(x,\rho)
+\frac{1}{2}\hat{\sigma}^2(\beta_t,x,\rho)\partial_{xx}v(t,x)
+\hat{b}(\beta_t,x,\rho)\partial_xv(t,x).
$$
Since $\lambda>0$, thus $\pi^*_t(\rho)$ takes the maximum value at $\beta_t$. The above observations motive us to find the optimal estimation for the parameter $\beta(\cdot)$ from the optimal density function $\pi^*(\cdot)$.

Based on the above analysis, we conclude the following results.
\begin{theorem}\label{the-2}
If $\mathcal{L}(t,x,\beta_t,\rho)$ satisfies for any $\rho\neq \beta_t$,
\begin{equation}\label{cond-1}
\mathcal{L}(t,x,\beta_t,\rho)>\mathcal{L}(t,x,\beta_t,\beta_t).
\end{equation}
We have that
$$
 {\arg\max}_{\rho\in \mathbb{R}}\pi^*_t(\rho)=\beta_t,\ \ 0\leq t\leq T.
$$
\end{theorem}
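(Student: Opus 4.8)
The plan is to read the conclusion directly off the closed form of the optimal density in equation (\ref{optim-1}). First I would note that, for fixed $t$ and $x$, the denominator $\int_{\mathbb{R}}\exp\bigl(-\tfrac{1}{\lambda}\mathcal{L}(t,x,\beta_t,u)\bigr)\mathrm{d}u$ in (\ref{optim-1}) is a strictly positive constant that does not depend on $\rho$, so that $\rho\mapsto\pi^*_t(\rho)$ is a positive scalar multiple of the map $\rho\mapsto\exp\bigl(-\tfrac{1}{\lambda}\mathcal{L}(t,x,\beta_t,\rho)\bigr)$. I would precede this with a short remark confirming that this normalizing integral is finite and positive: this is where Assumptions \ref{ass-1}--\ref{ass-3} enter (the Lipschitz/linear-growth bounds on $\hat b,\hat\sigma$, the polynomial growth of $\hat f$, and the resulting regularity of $v$), and without it $\pi^*_t$ is not even well defined.

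Next I would invoke monotonicity. Since $\lambda>0$, the prefactor $-1/\lambda$ is negative, and $z\mapsto\exp(z)$ is strictly increasing on $\mathbb{R}$; hence $\rho\mapsto\exp\bigl(-\tfrac{1}{\lambda}\mathcal{L}(t,x,\beta_t,\rho)\bigr)$ is a strictly decreasing function of the scalar quantity $\mathcal{L}(t,x,\beta_t,\rho)$. Combining this with the previous step, a point $\rho\in\mathbb{R}$ maximizes $\pi^*_t$ over $\mathbb{R}$ if and only if it minimizes $\rho\mapsto\mathcal{L}(t,x,\beta_t,\rho)$ over $\mathbb{R}$; in particular the set of maximizers of $\pi^*_t$ and the set of minimizers of $\mathcal{L}(t,x,\beta_t,\cdot)$ coincide.

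Finally I would apply the hypothesis. Condition (\ref{cond-1}) says precisely that $\mathcal{L}(t,x,\beta_t,\rho)>\mathcal{L}(t,x,\beta_t,\beta_t)$ for every $\rho\neq\beta_t$, i.e.\ $\beta_t$ is the unique global minimizer of $\mathcal{L}(t,x,\beta_t,\cdot)$ on $\mathbb{R}$; by the equivalence just established $\beta_t$ is therefore the unique maximizer of $\pi^*_t$, so $\arg\max_{\rho\in\mathbb{R}}\pi^*_t(\rho)=\beta_t$, and since $t$ was arbitrary this holds for all $0\leq t\leq T$. There is essentially no hard step: the argument is a one-line monotonicity observation once (\ref{optim-1}) is in hand, and the only point requiring a word of care is the implicit well-posedness of $\pi^*_t$ — finiteness of the normalizing constant and the regularity of $v$ in $(t,x)$ needed to legitimize the HJB derivation of (\ref{optim-1}) — which I would dispatch in a sentence using the standing assumptions.
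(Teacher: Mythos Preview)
Your proposal is correct and follows essentially the same approach as the paper's own proof, which is the one-line monotonicity observation that the Gibbs form (\ref{optim-1}) makes $\pi^*_t$ a strictly decreasing transform of $\mathcal{L}(t,x,\beta_t,\cdot)$, so the unique minimizer $\beta_t$ of $\mathcal{L}$ under (\ref{cond-1}) is the unique maximizer of $\pi^*_t$. Your added sentence on the finiteness and positivity of the normalizing constant is a reasonable caveat, though the paper simply takes (\ref{optim-1}) as given and does not comment on it.
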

\begin{proof}
From condition (\ref{cond-1}), we have that $\mathcal{L}(t,x,\beta_t,\rho)$ takes the unique minimum value at $\beta_t$, and $\pi^*_t(\rho)$ takes the unique maximum value at $\beta_t$. Obviously, $ {\arg\max}_{\rho\in \mathbb{R}}\pi^*_t(\rho)$ is equal to the parameter $\beta(\cdot)$.
\end{proof}

In the following section, we consider a linear stochastic differential equation (SDE) model to verify the results given in Theorem \ref{the-2}, where the drift or diffusion term contains unknown parameter.
\section{Linear SDE problem}\label{sec:4}

We now consider the problem of the Linear SDE case, where the drift or diffusion term is allowed to contain the unknown parameter. We first study the diffusion term with unknown parameter case, and then investigate the drift term with unknown parameter.
\subsection{The diffusion term with unknown parameter}
We consider the following linear SDE,
\begin{equation}\label{exam-sde-1}
\mathrm{d}X^{u}_s=\left[X^{u}_s+u_s\right]\mathrm{d}s
+\left[\beta_sX^{u}_s+u_s\right]\mathrm{d}W_s,
\end{equation}
with the initial condition $X^u_t=x$ and parameter $\beta(\cdot)$ needs to be estimated. The cost functional is given as follows,
\begin{equation}\label{exam-cost-1}
J(t,x;u(\cdot))=\mathbb{E}[\left(X^u_T\right)^2],
\end{equation}
where $u(\cdot)\in \mathcal{U}[t,T]$. Based on the classical optimal control theory, the related HJB equation is given by
$$
0=\partial_tv(t,x)+\inf_{u\in U}\left[\frac{1}{2}(\beta_tx+u)^2\partial_{xx}v(t,x)+
(x+u)\partial_xv(t,x)\right].
$$
The optimal control is given by
$$
u^*(s,X^*_s)=-(1+\beta_s)X^{*}_s,\quad t\leq s\leq T.
$$
We take the parameter $\beta_sX^{*}_s$ as $\rho_s$, and obtain
$$
u^{\rho}(s,X^{\rho}_s)=-X^{\rho}_s-\rho_s,\quad t\leq s\leq T.
$$
\begin{remark}
In this present paper, we aim to establish the theory to estimate the parameter appeared in the state's dynamic equation. To derive the explicit solution of the optimal control $\pi^*(\cdot)$, here, we take place the parameter $\beta(\cdot)X^{*}(\cdot)$ as $\rho(\cdot)$ which is essentially same with the method developed in Section \ref{sec:3}. Note that, when we obtain the optimal control $\rho(\cdot)$, we can divide $\rho^*(\cdot)$ by the state $x$. Then we can obtain the optimal estimation for the parameter $\beta(\cdot)$. Indeed, when deal with some special problems, this kind of structure method should be considered according the properties of the problem.
\end{remark}

Then equation (\ref{eq-sde-1}) becomes
\begin{equation}\label{exam-sde-2}
\mathrm{d}X^{\rho}_s=-\rho_s\mathrm{d}s
+(\beta_s-1-\rho_s{/}X^{\rho}_s)X^{\rho}_s\mathrm{d}W_s,\quad X^{\rho}_t=x.
\end{equation}
Now, applying Theorem \ref{the-rep}, we can formulate the RL stochastic optimal control problem. The exploratory dynamic equation is,
\begin{equation}\label{exam-sde-3}
\mathrm{d}X^{\pi}_s=\tilde{b}(\beta_s,X_s^{\pi},\pi_s)\mathrm{d}s
+\tilde{\sigma}(\beta_s,X_s^{\pi},{\pi}_s)\mathrm{d}W_s,\ X^{\pi}_t=x,
\end{equation}
where
$$
\tilde{b}(\beta_s,x,\pi_s)=-\int_{\mathbb{R}}\rho\pi_s(\rho)\mathrm{d}\rho,
$$
and
$$
\tilde{\sigma}(\beta_s,x,{\pi}_s)=
\sqrt{\int_{\mathbb{R}}(\beta_s-1-\rho{/}X^{\rho}_s)^2(X^{\rho}_s)^2\pi_s(\rho)\mathrm{d}\rho}.
$$
The exploratory cost functional becomes
\begin{equation}\label{exam-cost-3}
J(t,x;\pi(\cdot))=\mathbb{E}[\left(X^{\pi}_T\right)^2].
\end{equation}

By the formula of the optimal control in (\ref{optim-1}), we have that
\begin{equation}\label{exam-optim-1}
\pi^*_t(\rho)=\frac{\exp\left(-\frac{1}{\lambda}\left(\frac{1}{2}(\beta_t-1-\rho{/}x)^2x^2\partial_{xx}v(t,x)
-\rho\partial_xv(t,x) \right) \right)}{\int_{\mathbb{R}}\exp\left(-\frac{1}{\lambda}\left(\frac{1}{2}(\beta_t-1-u{/}x)^2x^2\partial_{xx}v(t,x)
-u\partial_xv(t,x) \right) \right)\mathrm{d}u},
\end{equation}
and thus
$$
\pi^*_t(\rho)=\frac{1}{\sqrt{2\pi \sigma^2(t,x)}}\exp\left(-\frac{\left(\rho-\mu(t,x)\right)^2}{2\sigma^2(t,x)}\right),
$$
where
\begin{align*}
& \mu(t,x)=-\frac{a_2(t,x)}{2a_1(t,x)};\\
& \sigma^2(t,x)=\frac{\lambda}{2a_1(t,x)};\\
& a_1(t,x)=\frac{\partial_{xx}v(t,x)}{2};\\
& a_2(t,x)=(1-\beta_t)x\partial_{xx}v(t,x)-\partial_xv(t,x).
\end{align*}
Putting $\pi^*_t(\rho)$ into equation (\ref{hjb-1}), and by a simple calculation, one obtains
\begin{equation}\label{exam-hjb-1}
0=\partial_tv(t,x)+(\beta_t-1)^2x^2a_1(t,x)-\mu^2(t,x)a_1(t,x)
-\frac{\lambda}{2}\ln\frac{\pi\lambda}{a_1(t,x)}.
\end{equation}

We assume that the classical solution of equation (\ref{exam-hjb-1}) satisfies
$$
v(t,x)=\alpha_1(t)x^2+\alpha_2(t).
$$
Then, it follows that
\begin{align*}
& \mu(t,x)=\beta_tx;\\
& \sigma^2(t,x)=\frac{\lambda}{2\alpha_1(t)};\\
& a_1(t,x)=\alpha_1(t);\\
& a_2(t,x)=-2\beta_t\alpha_1(t)x.
\end{align*}
Now, we put the formula of $v(t,x)$ into equation (\ref{exam-hjb-1}) and obtain that
\begin{equation}\label{exam-hjb-2}
0=\alpha'_1(t)x^2+\alpha'_2(t)+(1-2\beta_t)\alpha_1(t)x^2
-\frac{\lambda}{2}\ln\frac{\pi\lambda}{\alpha_1(t)}.
\end{equation}
Thus, $\alpha_1(t),\ \alpha_2(t)$ satisfy the following equations
\begin{align*}
& 0=\alpha'_1(t)+(1-2\beta_t)\alpha_1(t),\quad \alpha_1(T)=1;\\
& 0=\alpha'_2(t)-\frac{\lambda}{2}\ln\frac{\pi\lambda}{\alpha_1(t)},\quad \alpha_2(T)=0,
\end{align*}
and
\begin{align*}
& \alpha_1(t)=\exp\left(\int_t^T(1-2\beta_s)\mathrm{d}s\right);\\
& \alpha_2(t)=\int_t^T\frac{\lambda}{2}\ln\frac{\pi\lambda}{\alpha_1(s)}\mathrm{d}s,
\end{align*}
which solve the value function $v(t,x)$ with the optimal control $\pi^*(\rho)$ satisfying the Gaussian density function,
$$
\pi_t^*(\rho)=\frac{1}{\sqrt{\pi \lambda{/}\alpha_1(t)}}\exp\left(-\frac{\left(\rho-(\beta_tx)^2\right)^2}
{\lambda{/}\alpha_1(t)}\right),\quad 0\leq t\leq T.
$$
For notation simplicity, we denote by
$$
\pi_t^*(\cdot)\overset{d}{=} \mathcal{N}(\beta_tx,\ \frac{\lambda}{2\alpha_1(t)}),
$$
where
$$
\alpha_1(t)=\exp\left(\int_t^T(1-2\beta_s)\mathrm{d}s\right).
$$

We conclude the above results in the following theorem.
\begin{theorem}\label{the-3}
For the exploratory dynamic equation (\ref{exam-sde-3}) with cost functional (\ref{exam-cost-3}), the optimal control is given as follows,
$$
\pi_t^*(\cdot)\overset{d}{=} \mathcal{N}(\beta_tx,\ \frac{\lambda}{2\alpha_1(t)}),\ 0\leq t\leq T,
$$
where $\displaystyle \mathcal{N}(\beta_tx,\ \frac{\lambda}{2\alpha_1(t)})$ denotes the Gaussian density function with mean $\beta_tx$ and variance $\displaystyle \frac{\lambda}{2\alpha_1(t)}$.
\end{theorem}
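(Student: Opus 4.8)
The plan is to follow the Hamilton--Jacobi--Bellman route of Section~\ref{sec:3} specialized to this example. First I would write the HJB equation \eqref{hjb-1} for the present data, namely $\hat f\equiv 0$, $\hat\Phi(x)=x^2$ (hence terminal condition $v(T,x)=x^2$), $\hat b(\beta_t,x,\rho)=-\rho$ and $\hat\sigma^2(\beta_t,x,\rho)=(\beta_t-1-\rho/x)^2x^2$, as read off from \eqref{exam-sde-2}. The pointwise minimization inside \eqref{hjb-1} is the one already solved in \eqref{optim-1}, so the candidate optimal density is $\pi_t^*(\rho)\propto\exp\!\big(-\tfrac1\lambda\mathcal L(t,x,\beta_t,\rho)\big)$; the key structural fact for this model is that $\mathcal L(t,x,\beta_t,\rho)=a_1(t,x)\rho^2+a_2(t,x)\rho+\text{(term free of }\rho)$ is quadratic in $\rho$, with $a_1=\tfrac12\partial_{xx}v$ and $a_2=(1-\beta_t)x\partial_{xx}v-\partial_x v$. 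Completing the square then gives at once that $\pi_t^*$ is Gaussian, $\pi_t^*\overset{d}{=}\mathcal N\big(-\tfrac{a_2}{2a_1},\tfrac{\lambda}{2a_1}\big)$, which is the form displayed before the theorem.

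Second, I would substitute this $\pi_t^*$ back into \eqref{hjb-1} and evaluate the resulting Gaussian integrals: using $\mathbb E[\rho]=\mu$, $\mathbb E[\rho^2]=\mu^2+\sigma^2$ and the Gaussian entropy $\lambda\int\pi_t^*\ln\pi_t^*=-\tfrac\lambda2\ln\tfrac{\pi\lambda}{a_1}-\tfrac\lambda2$, the $\tfrac\lambda2$ terms cancel and one lands on the scalar PDE \eqref{exam-hjb-1}. At this point I would impose the ansatz $v(t,x)=\alpha_1(t)x^2+\alpha_2(t)$, which is the natural guess given the linear dynamics and the purely quadratic cost. Under it $a_1=\alpha_1(t)$ is $x$-independent, $\mu(t,x)=\beta_t x$, $\sigma^2=\lambda/(2\alpha_1(t))$, and --- this is the step that has to be checked rather than assumed --- every $x$-dependent term in \eqref{exam-hjb-1} collapses onto $x^2$ (the would-be higher powers coming from the $\rho/x$ inside $\hat\sigma$ cancel precisely), leaving \eqref{exam-hjb-2}.

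Third, matching the $x^2$-coefficient and the $x$-free part of \eqref{exam-hjb-2} yields the two decoupled linear ODEs $\alpha_1'+(1-2\beta_t)\alpha_1=0$ with $\alpha_1(T)=1$ and $\alpha_2'=\tfrac\lambda2\ln\tfrac{\pi\lambda}{\alpha_1}$ with $\alpha_2(T)=0$, solved by $\alpha_1(t)=\exp\!\big(\int_t^T(1-2\beta_s)\,\mathrm ds\big)$ and the corresponding integral for $\alpha_2$; since $\beta$ is piecewise continuous, $\alpha_1$ is well defined and strictly positive, so the variance $\lambda/(2\alpha_1(t))$ is genuinely positive and the Gaussian is non-degenerate. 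Finally I would close the argument with a standard verification theorem: $v(t,x)=\alpha_1(t)x^2+\alpha_2(t)$ is a classical solution of \eqref{hjb-1} with the right terminal value, the feedback density $\pi_t^*$ is admissible (its mean $-\beta_t x$ and its diffusion coefficient are Lipschitz in $x$ with linear growth, so \eqref{exam-sde-3} is well posed), and applying It\^{o} to $v(s,X_s^{\pi^*})$ shows $\pi^*$ attains the infimum of \eqref{exam-cost-3}. I expect the verification step, together with the self-consistency check of the quadratic ansatz, to be the only places where some care is genuinely needed; everything else is bookkeeping of Gaussian moments.
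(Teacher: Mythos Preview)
Your proposal is correct and follows essentially the same approach as the paper: specialize the exploratory HJB equation \eqref{hjb-1} via \eqref{optim-1}, recognize $\mathcal L$ is quadratic in $\rho$ so $\pi_t^*$ is Gaussian, plug back in to get \eqref{exam-hjb-1}, impose the quadratic ansatz $v(t,x)=\alpha_1(t)x^2+\alpha_2(t)$, and read off the decoupled ODEs for $\alpha_1,\alpha_2$. The only additions you make are the explicit verification step and the admissibility check, which the paper leaves implicit; one small slip is that the mean of $\pi_t^*$ is $\beta_t x$, not $-\beta_t x$ (it is the drift $\tilde b=-\mu$ that picks up the minus sign).
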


\begin{remark}
In Theorem \ref{the-3}, we show the explicit formula of the optimal control $\pi_t^*(\cdot),\ 0\leq t\leq T$. In the control $\pi_t^*(\cdot),\ 0\leq t\leq T$, we can use the mean $\beta_tx$ to estimate the parameter $\beta(\cdot)$. In this exploratory stochastic optimal control problem, the variance $\frac{\lambda}{2\alpha_1(t)}$ of the optimal control $\pi_t^*(\cdot)$ investigates the efficiency of exploratory of the RL. It is worth noting that, let $\lambda\to 0$, $\pi_t^*(\cdot)$ reduces to the Dirac measure $\delta_{\beta_tx}(\cdot)$.

Now, we consider the following classical optimal control problem. The state satisfies
\begin{equation}\label{re-exam-sde-1}
\mathrm{d}X^{\rho}_s=-\rho_s\mathrm{d}s
+\left[(\beta_s-1)X^{\rho}_s-\rho_s\right]\mathrm{d}W_s,,\ X^{\rho}_t=x.
\end{equation}
and the cost functional is
\begin{equation}\label{re-exam-cost-1}
J(t,x;u(\cdot))=\mathbb{E}[\left(X^u_T\right)^2].
\end{equation}
The value function is defined by
$$
\hat{v}(t,x)=\inf_{\rho(\cdot)\in \mathcal{D}[t,T]}J(t,x;u(\cdot)),
$$
and satisfies the following HJB equation,
\begin{equation}\label{re-hjb-1}
0=\min_{\rho\in\mathbb{R}}\left[\partial_t\hat{v}(t,x)+\frac{1}{2}\left[(\beta_t-1)x-\rho\right]^2\partial_{xx}\hat{v}(t,x)
-\rho\partial_x\hat{v}(t,x)\right].
\end{equation}
The optimal control of HJB equation (\ref{re-hjb-1}) is
$$
\rho^*_t=\frac{(\beta_t-1)x\partial_{xx}\hat{v}(t,x)+\partial_x\hat{v}(t,x)}{\partial_{xx}\hat{v}(t,x)}.
$$
Then, putting $\rho_t^*$ into equation (\ref{re-hjb-1}), and we assume the formula of the value function $\hat{v}(t,x)$,
$$
\hat{v}(t,x)=b_1(t)x^2+b_2(t).
$$
Thus, $b_1(t),\ b_2(t)$ satisfy the following equations
\begin{align*}
& 0=b'_1(t)+(1-2\beta_t)b_1(t),\quad b_1(T)=1;\\
& 0=b'_2(t),\quad b_2(T)=0,
\end{align*}
and
\begin{align*}
& b_1(t)=\exp\left(\int_t^T(1-2\beta_s)\mathrm{d}s\right);\\
& b_2(t)=0.
\end{align*}
The value function $\hat{v}(t,x)$ is given as
$$
\hat{v}(t,x)=\exp\left(\int_t^T(1-2\beta_s)\mathrm{d}s\right)x^2,
$$
with the optimal control $\rho^*_t=\beta_tx$.

Thus, the optimal control $\rho^*_t$ of the classical optimal control problem is consistent with the mean of the optimal density function $\pi^*_t$. Therefore, based on the reinforcement learning (RL) stochastic optimal control structure presented in this study, we can use the RL method to learn the optimal density function $\pi^*_t$ and then obtains the estimation for parameter $\beta(\cdot)$.
\end{remark}

\subsection{The drift term with unknown parameter}\label{subsec:4.2}
We could also consider the following model where the state satisfies
\begin{equation}\label{2exam-sde-1}
\mathrm{d}X^{u}_s=\left[\beta_s X^{u}_s+u_s\right]\mathrm{d}s
+\left[X^{u}_s+u_s\right]\mathrm{d}W_s,
\end{equation}
with the initial condition $X^u_t=x$ and parameter $\beta(\cdot)$ needs to be estimated. Note that the parameter $\beta(\cdot)$ appears in the drift term of the state equation (\ref{2exam-sde-1}). We construct the following cost functional differed from (\ref{re-exam-cost-1}),
\begin{equation}\label{2exam-cost-1}
J(t,x;u(\cdot))=\mathbb{E}\int_{t}^T[(u_s+X^u_s+\beta_sX^u_s)^2]\mathrm{d}s.
\end{equation}
\begin{remark}
In the cost functional (\ref{2exam-cost-1}), there is unknown term $\beta_sX^u_s$. In fact, from  equation (\ref{2exam-sde-1}), we can observe the value of $\beta_sX^u_s,\ t\leq s\leq T$. Precisely, Based on an input $u(\cdot)$, we can obtain the value of $J(t,x;u(\cdot))$.
\end{remark}

Indeed, when we consider the cost functional (\ref{re-exam-cost-1}), the optimal control admits the unique optima control $u^*(\cdot)=-2X^*(\cdot)$ which cannot be used to estimate the parameter $\beta(\cdot)$.
From cost functional (\ref{2exam-cost-1}), the related optimal control is given as follows:
$$
u^*_s=-X_s^*-{\beta_sX_s^*},\ t\leq s\leq T.
$$

We take place ${\beta_sX_s^*}$ as $\rho_s$, and thus
$$
u^{\rho}_s=-X_s^{\rho}-{\rho_s},\ t\leq s\leq T.
$$
Then, equation (\ref{2exam-sde-1}) becomes
\begin{equation}\label{2exam-sde-2}
\mathrm{d}X^{\rho}_s=[(\beta_s-1)X_s^{\rho}-\rho_s]\mathrm{d}s
-{\rho}_s\mathrm{d}W_s,\ X^{\rho}_t=x.
\end{equation}
Based on Theorem \ref{the-rep}, the exploratory dynamic equation is,
\begin{equation}\label{2exam-sde-3}
\mathrm{d}X^{\pi}_s=\tilde{b}(\beta_s,X_s^{\pi},\pi_s)\mathrm{d}s
+\tilde{\sigma}(\beta_s,X_s^{\pi},{\pi}_s)\mathrm{d}W_s,\ X^{\pi}_t=x,
\end{equation}
where
$$
\tilde{b}(\beta_s,x,\pi_s)=\int_{\mathbb{R}}[(\beta_s-1)x-\rho]\pi_s(\rho)\mathrm{d}\rho,
$$
and
$$
\tilde{\sigma}(\beta_s,x,{\pi}_s)=
\sqrt{\int_{\mathbb{R}}\rho^2\pi_s(\rho)\mathrm{d}\rho}.
$$
The exploratory cost functional is,
\begin{equation}\label{2exam-cost-2}
J(t,x;\pi(\cdot))=\mathbb{E}\left[\int_{t}^T
\int_{\mathbb{R}}(\rho-\beta_sX^{\pi}_s)^2\pi_s(\rho)\mathrm{d}\rho\mathrm{d}s\right].
\end{equation}

By the formula of the optimal control in (\ref{optim-1}), we have that
\begin{equation}\label{2exam-optim-1}
\pi^*_t(\rho)=\frac{\exp\left(-\frac{1}{\lambda}\left((\rho-\beta_tx)^2
+\frac{1}{2}\rho^2\partial_{xx}v(t,x)
+[(\beta_t-1)x-\rho]\partial_xv(t,x) \right) \right)}{\int_{\mathbb{R}}\exp\left(-\frac{1}{\lambda}\left((u-\beta_tx)^2
+\frac{1}{2}u^2\partial_{xx}v(t,x)
+[(\beta_t-1)x-u]\partial_xv(t,x) \right) \right)\mathrm{d}u},
\end{equation}
which satisfies the Gaussian density function with mean
$$
\mu(t,x)=\frac{2\beta_tx+\partial_xv(t,x)}{2+\partial_{xx}v(t,x)},
$$
and variance
$$
\sigma^2(t,x)=\frac{\lambda}{2+\partial_{xx}v(t,x)}.
$$
We assume
$$
v(t,x)=\alpha_1(t)x^2+\alpha_2(t).
$$
The related HJB equation becomes
\begin{equation}\label{2exam-hjb-1}
0=\alpha'_1(t)x^2+\alpha'_2(t)+\beta_t^2x^2+2(\beta_t-1)\alpha_1(t)x^2
-\mu^2(t,x)(1+\alpha_1(t))-\frac{\lambda}{2}\ln\frac{\pi\lambda}{1+\alpha_1(t)},
\end{equation}
and derives that
\begin{align*}
& 0=\alpha'_1(t)(1+\alpha_1(t))+\beta_t^2\alpha_1(t)+2\beta_t\alpha^2_1(t)-2\alpha_1(t)-3\alpha_1^2(t);\\
& 0=\alpha_2(t)-\frac{\lambda}{2}\ln\frac{\pi\lambda}{1+\alpha_1(t)},
\end{align*}
with $\alpha_1(T)=\alpha_2(T)=0$, and thus
\begin{align*}
& \alpha_1(t)=0;\\
& \alpha_2(t)=\frac{\lambda(T-t)}{2}\ln(\pi \lambda),
\end{align*}
which solves the value function $v(t,x)$ with the optimal control $\pi^*(\rho)$,
$$
\pi_t^*(\rho)=\frac{1}{\sqrt{\pi \lambda}}\exp\left(-\frac{\left(\rho-\beta_tx\right)^2}
{\lambda}\right),\quad 0\leq t\leq T.
$$
For notation simplicity, we denote by
$$
\pi_t^*(\cdot)\overset{d}{=} \mathcal{N}(\beta_tx,\ \frac{\lambda}{2}).
$$
Based on RL, we can learn the mean $\beta_tx$ from the observation data. Furthermore, we can use the variance $\displaystyle \frac{\lambda}{2}$ to adjust the exploratory rate.

In this section, we have considered the parameters appeared in drift or diffusion term. In the following Section \ref{sec:5}, we aim to consider a more general case where both drift and diffusion terms contain parameter.

\section{Extension}\label{sec:5}
Now, we general the model in Section \ref{sec:4} to the case where both the drift and diffusion terms contain unknown parameters. We show the details how to estimate the parameters in the drift and diffusion terms. Then, we give an example to verify the main results.

\subsection{General model}
We consider the following stochastic differential equation with the deterministic parameters $\alpha_s,\beta_s\in \mathbb{R},\ t\leq s\leq T$, and control $u(\cdot)\in \mathcal{U}[t,T]$,
\begin{equation}\label{ge-eq-sde}
\mathrm{d}X^u_s=b(\alpha_s,X_s^u,u_s)\mathrm{d}s+\sigma(\beta_s,X_s^u,u_s)\mathrm{d}W_s,\ X^u_t=x.
\end{equation}
We construct two cost functionals,
\begin{equation}\label{ge-cost-1}
J_1(t,x;u(\cdot))=\mathbb{E}\left[\int_t^Tf_1(X_s^u,u_s)\mathrm{d}s+\Phi_1(X_T^u)\right],
\end{equation}
and
\begin{equation}\label{ge-cost-2}
J_2(t,x;u(\cdot))=\mathbb{E}\left[\int_t^Tf_2(X_s^u,u_s)\mathrm{d}s+\Phi_2(X_T^u)\right],
\end{equation}
to estimate the parameters $\alpha(\cdot)$ and $\beta(\cdot)$, respectively.

\textbf{Firstly}, we show how to estimate $\alpha(\cdot)$ from cost functional $J_1(t,x;u(\cdot))$. By choosing appropriate functions $f_1(\cdot)$ and $\Phi_1(\cdot)$, we obtain a feedback optimal control $u^{1,*}_s=u^{1,*}(\alpha_s,s,X^{1,*}_s),\ t\leq s\leq T$  from cost functional $J_1(t,x;u(\cdot))$, where $X^{1,*}(\cdot)$ is the optimal state with the optimal control $u^{1,*}(\cdot)$. However, we need that $u^{1,*}(\cdot)$ contains the parameter $\alpha(\cdot)$ only. The question is that if $u^{1,*}(\cdot)$ contains the parameter $\beta(\cdot)$, we don't know the value of the new input control (see $u^{\rho}(\cdot)$). We take place $\alpha(\cdot)$ as a deterministic process $\rho(\cdot)$ in the feedback optimal control $u^{1,*}(\alpha_s,s,X^{1,*}_s)$, and denotes by $u^{\rho}_s=u^{1,*}(\rho_s,s,X^{\rho}_s)$, where $X^{\rho}_s$ satisfies the following stochastic differential equation,
\begin{equation}\label{ge-eq-sde-1}
\mathrm{d}X^{\rho}_s=\hat{b}(\alpha_s,X_s^{\rho},\rho_s)\mathrm{d}s
+\hat{\sigma}(\beta_s,X_s^{\rho},{\rho}_s)\mathrm{d}W_s,\ X^{\rho}_t=x,
\end{equation}
where $\hat{b}(\alpha_s,X_s^{\rho},\rho_s)={b}(\alpha_s,X_s^{\rho},u^{\rho}_s)$ and $\hat{\sigma}(\beta_s,X_s^{\rho},\rho_s)={\sigma}(\beta_s,X_s^{\rho},u^{\rho}_s)$. Furthermore, we rewrite the cost functional (\ref{ge-cost-1}) as follows,
\begin{equation}\label{ge-cost-3}
J_1(t,x;\rho(\cdot))=\mathbb{E}\left[\int_t^Tf_1(X_s^{\rho},\rho_s)\mathrm{d}s+\Phi_1(X_T^{\rho})\right],
\end{equation}
where $\rho_s\in \mathbb{R},\ t\leq s\leq T$. Based on Theorem \ref{the-rep}, we consider the following exploratory dynamic equation,
\begin{equation}\label{ge-eq-sde-2}
\mathrm{d}X^{\pi}_s=\tilde{b}(\alpha_s,X_s^{\pi},\pi_s)\mathrm{d}s
+\tilde{\sigma}(\beta_s,X_s^{\pi},{\pi}_s)\mathrm{d}W_s,\ X^{\pi}_t=x,
\end{equation}
where $\pi_s$ is the density function of the input control $\rho_s$,
$$
\tilde{b}(\alpha_s,x,\pi_s)=\int_{\mathbb{R}}\hat{b}(\alpha_s,x,\rho)\pi_s(\rho)\mathrm{d}\rho,
$$
and
$$
\tilde{\sigma}(\beta_s,x,{\pi}_s)=
\sqrt{\int_{\mathbb{R}}\hat{\sigma}^2(\beta_s,x,\rho)\pi_s(\rho)\mathrm{d}\rho}
$$
for $t\leq s\leq T$. The cost functional (\ref{ge-cost-3}) is rewritten as follows,
\begin{equation}\label{ge-cost-4}
\tilde{J}_1(t,x;\pi(\cdot))=
\mathbb{E}\left[\int_t^T\int_{\mathbb{R}}\left[f_1(X_s^{\pi},\rho)\pi_s(\rho)
+\lambda\pi_s(\rho)\ln\pi_s(\rho)\right]\mathrm{d}
\rho\mathrm{d}s+\Phi_1(X_T^{\pi})\right].
\end{equation}

\textbf{Secondly}, we estimate $\beta(\cdot)$ by cost functional $J_2(t,x;u(\cdot))$. We choose appropriate functions $f_2(\cdot)$ and $\Phi_2(\cdot)$, such that the feedback optimal control $u^{2,*}_s=u^{2,*}(\beta_s,s,X^{2,*}_s),\ t\leq s\leq T$ of the cost functional $J_2(t,x;u(\cdot))$ contains the parameter $\beta(\cdot)$, where $X^{2,*}(\cdot)$ is the optimal state with the optimal control $u^{2,*}(\cdot)$. We take place $\beta(\cdot)$ as a deterministic process $\gamma(\cdot)$ in the feedback optimal control $u^{2,*}(\beta_s,s,X^{2,*}_s)$, and denotes by $u^{\gamma}_s=u^{2,*}(\gamma_s,s,X^{\gamma}_s)$, where $X^{\gamma}_s$ satisfies the following stochastic differential equation.
\begin{equation}\label{22ge-eq-sde-1}
\mathrm{d}X^{\gamma}_s=\hat{b}(\alpha_s,X_s^{\gamma},\gamma_s)\mathrm{d}s
+\hat{\sigma}(\beta_s,X_s^{\gamma},{\gamma}_s)\mathrm{d}W_s,\ X^{\gamma}_t=x.
\end{equation}
where $\hat{b}(\alpha_s,X_s^{\gamma},\gamma_s)={b}(\alpha_s,X_s^{\gamma},u^{\gamma}_s)$ and $\hat{\sigma}(\beta_s,X_s^{\gamma},\gamma_s)={\sigma}(\beta_s,X_s^{\gamma},u^{\gamma}_s)$. Furthermore, we rewrite the cost functional (\ref{ge-cost-2}) as follows,
\begin{equation}\label{22ge-cost-2}
J_2(t,x;\gamma(\cdot))=\mathbb{E}\left[\int_t^Tf_2(X_s^{\gamma},\gamma_s)\mathrm{d}s
+\Phi_2(X_T^{\gamma})\right],
\end{equation}
where $\gamma_s\in \mathbb{R},\ t\leq s\leq T$. Then, based on Theorem \ref{the-rep}, we can introduce the exploratory dynamic equations almost same with (\ref{ge-eq-sde-2}) and (\ref{ge-cost-4}). For notations simplicity, we omit them.

\subsection{Linear SDE with unknown parameters}
Now, we consider the following example.
\begin{equation}\label{ge-exam-sde-1}
\mathrm{d}X^{u}_s=\left[\alpha_s X^{u}_s+u_s\right]\mathrm{d}s
+\left[\beta_sX^{u}_s+u_s\right]\mathrm{d}W_s,
\end{equation}
where $\alpha(\cdot)$ and $\beta(\cdot)$ need to be estimated. Note that, equation (\ref{ge-exam-sde-1}) contains the parameters $\alpha(\cdot)$ and $\beta(\cdot)$. Thus, we cannot use the cost functional developed in Subsection \ref{subsec:4.2} to investigate the estimation for $\alpha(\cdot)$, directly. Therefore, we first consider to estimate the parameter $\beta(\cdot)$. Based on the estimation of $\beta(\cdot)$, we can use the cost functional constructed in Subsection \ref{subsec:4.2} to estimate the parameter $\alpha(\cdot)$.

\textbf{Step 1:} We first construct the cost functional $J_1(t,x;u(\cdot))$ used to estimate parameter $\beta(\cdot)$, where
\begin{equation}\label{2ge-exam-cost-1}
J_1(t,x;u(\cdot))=\mathbb{E}[(X^{u}_T)^2].
\end{equation}
Based on the classical optimal control theory, the related HJB equation is given by
$$
0=\partial_tv(t,x)+\inf_{u\in U}\left[\frac{1}{2}(\beta_tx+u)^2\partial_{xx}v(t,x)+
(\alpha_tx+u)\partial_xv(t,x)\right].
$$
The related optimal control is
$$
u^*(s,X^*_s)=-(1+\beta_s)X^{*}_s,\quad t\leq s.
$$
We take place ${\beta_sX_s^*}$ as $\gamma_s$, and have that
$$
u^{\gamma}_s=-{\gamma_s}-{X_s^*},\ t\leq s\leq T.
$$
Equation (\ref{ge-exam-sde-1}) becomes
\begin{equation}\label{2ge-exam-sde-2}
\mathrm{d}X^{\gamma}_s=[(\alpha_s-1)X^{\gamma}_s-\gamma_s]\mathrm{d}s
+[(\beta_s-1)X^{\gamma}_s-{\gamma}_s]\mathrm{d}W_s,\ X^{\gamma}_t=x.
\end{equation}
From Theorem \ref{the-rep}, the exploratory dynamic equation is,
\begin{equation}\label{2ge-exam-sde-3}
\mathrm{d}X^{\pi}_s=\tilde{b}(\alpha_s,X_s^{\pi},\pi_s)\mathrm{d}s
+\tilde{\sigma}(\beta_s-\alpha_s,X_s^{\pi},{\pi}_s)\mathrm{d}W_s,\ X^{\pi}_t=x,
\end{equation}
where
$$
\tilde{b}(\beta_s,x,\pi_s)=\int_{\mathbb{R}}[(\alpha_s-1)x-\gamma]\pi_s(\gamma)\mathrm{d}\gamma,
$$
and
$$
\tilde{\sigma}(\beta_s,x,{\pi}_s)=
\sqrt{\int_{\mathbb{R}}[(\beta_s-1)x-\gamma]^2\pi_s(\rho)\mathrm{d}\rho}.
$$
The exploratory cost functional is,
\begin{equation}\label{2ge-exam-cost-2}
J_1(t,x;\pi(\cdot))=\mathbb{E}[(X^{\pi}_T)^2].
\end{equation}

By the formula of the optimal control in (\ref{optim-1}), we have that
\begin{equation}\label{2ge-exam-optim-1}
\pi^{1,*}_t(\rho)=\frac{\exp\left(-\frac{1}{\lambda}\left(
\frac{1}{2}[(\beta_t-1)x-\gamma]^2\partial_{xx}v(t,x)
+[(\alpha_t-1)x-\gamma]\partial_xv(t,x) \right) \right)}{\int_{\mathbb{R}}\exp\left(-\frac{1}{\lambda}
\left(\frac{1}{2}[(\beta_t-1)x-u]^2\partial_{xx}v(t,x)
+[(\alpha_t-1)x-u]\partial_xv(t,x) \right) \right)\mathrm{d}u},
\end{equation}
which satisfies the Gaussian density function with mean
$$
\mu(t,x)=\frac{(\beta_t-1)x\partial_{xx}v+\partial_xv(t,x)}{\partial_{xx}v(t,x)},
$$
and variance
$$
\sigma^2(t,x)=\frac{\lambda}{\partial_{xx}v(t,x)}.
$$
We assume
$$
v(t,x)=\theta_1(t)x^2+\theta_2(t).
$$
The related HJB equation becomes
\begin{equation}\label{2ge-exam-hjb-1}
0=\theta'_1(t)x^2+\theta'_2(t)+(\beta_t-1)^2\theta_1(t)x^2+2(\alpha_t-1)\theta_1(t)x^2
-\mu^2(t,x)\theta_1(t)-\frac{\lambda}{2}\ln\frac{\pi\lambda}{\theta_1(t)},
\end{equation}
and derives that
\begin{align*}
& \theta_1(t)=\exp\left(\int_t^T[2(\alpha_s-\beta_s)-1]\mathrm{d}s\right);\\
& \theta_2(t)=\frac{\lambda}{2}\int_t^T\ln(\frac{\pi\lambda}{\theta_1(s)})\mathrm{d}s,
\end{align*}
which solves the value function $v(t,x)$ with the optimal control $\pi^{1,*}(\gamma)$ satisfies Gaussian density function,
$$
\pi_t^{1,*}(\gamma)=\sqrt{\frac{\theta_1(t)}{\pi \lambda}}\exp\left(-\frac{\left(\gamma-\beta_tx\right)^2}
{\lambda{/}\theta_1(t)}\right),\quad 0\leq t\leq T.
$$
For notation simplicity, we denote by
$$
\pi_t^{1,*}(\cdot)\overset{d}{=} \mathcal{N}(\beta_tx,\ \frac{\lambda}{2\theta_1(t)}),
$$
where
$$
\theta_1(t)=\exp\left(\int_t^T[2(\alpha_s-\beta_s)-1]\mathrm{d}s\right).
$$

\textbf{Step 2:} We then construct the cost functional $J_2(t,x;u(\cdot))$ used to estimate parameter $\alpha(\cdot)$, where
\begin{equation}\label{ge-exam-cost-1}
J_2(t,x;u(\cdot))=\mathbb{E}\int_{t}^T[(u_s+\alpha_sX^u_s+\beta_sX^u_s)^2]\mathrm{d}s.
\end{equation}
Minimizing the cost functional (\ref{ge-exam-cost-1}), the related optimal control is given by
$$
u^*_s=-\alpha_sX_s^*-{\beta_sX_s^*},\ t\leq s\leq T.
$$
We take place ${\alpha_sX_s^*}$ as $\rho_s$, and thus
$$
u^{\rho}_s=-{\rho_s}-{\beta_sX_s^*},\ t\leq s\leq T.
$$
Then, equation (\ref{ge-exam-sde-1}) becomes
\begin{equation}\label{ge-exam-sde-2}
\mathrm{d}X^{\rho}_s=[(\alpha_s-\beta_s)X_s^{\rho}-\rho_s]\mathrm{d}s
-{\rho}_s\mathrm{d}W_s,\ X^{\rho}_t=x.
\end{equation}
From Theorem \ref{the-rep}, the exploratory dynamic equation is,
\begin{equation}\label{ge-exam-sde-3}
\mathrm{d}X^{\pi}_s=\tilde{b}(\alpha_s-\beta_s,X_s^{\pi},\pi_s)\mathrm{d}s
+\tilde{\sigma}(\beta_s,X_s^{\pi},{\pi}_s)\mathrm{d}W_s,\ X^{\pi}_t=x,
\end{equation}
where
$$
\tilde{b}(\beta_s,x,\pi_s)=\int_{\mathbb{R}}[(\alpha_s-\beta_s)x-\rho]\pi_s(\rho)\mathrm{d}\rho,
$$
and
$$
\tilde{\sigma}(\beta_s,x,{\pi}_s)=
\sqrt{\int_{\mathbb{R}}\rho^2\pi_s(\rho)\mathrm{d}\rho}.
$$
The exploratory cost functional is,
\begin{equation}\label{ge-exam-cost-2}
J_2(t,x;\pi(\cdot))=\mathbb{E}[\int_{t}^T
\int_{\mathbb{R}}(\rho-\alpha_sX^{\pi}_s)^2\pi_s(\rho)\mathrm{d}\rho\mathrm{d}s].
\end{equation}

By the formula of the optimal control in (\ref{optim-1}), we have that
\begin{equation}\label{ge-exam-optim-1}
\pi^{2,*}_t(\rho)=\frac{\exp\left(-\frac{1}{\lambda}\left((\rho-\alpha_tx)^2
+\frac{1}{2}\rho^2\partial_{xx}v(t,x)
+[(\alpha_t-\beta_t)x-\rho]\partial_xv(t,x) \right) \right)}{\int_{\mathbb{R}}\exp\left(-\frac{1}{\lambda}\left((u-\alpha_tx)^2
+\frac{1}{2}u^2\partial_{xx}v(t,x)
+[(\alpha_t-\beta_t)x-u]\partial_xv(t,x) \right) \right)\mathrm{d}u},
\end{equation}
which satisfies the Gaussian density function with mean
$$
\mu(t,x)=\frac{2\alpha_tx+\partial_xv(t,x)}{2+\partial_{xx}v(t,x)},
$$
and variance
$$
\sigma^2(t,x)=\frac{\lambda}{2+\partial_{xx}v(t,x)}.
$$
We assume
$$
v(t,x)=\theta_1(t)x^2+\theta_2(t).
$$
The related HJB equation becomes
\begin{equation}\label{ge-exam-hjb-1}
0=\theta'_1(t)x^2+\theta'_2(t)+\alpha_t^2x^2+2(\alpha_t-\beta_t)\theta_1(t)x^2
-\mu^2(t,x)(1+\theta_1(t))-\frac{\lambda}{2}\ln\frac{\pi\lambda}{1+\theta_1(t)}.
\end{equation}
and derives that
\begin{align*}
& \theta_1(t)=0;\\
& \theta_2(t)=\frac{\lambda(T-t)}{2}\ln(\pi \lambda),
\end{align*}
which solves the value function $v(t,x)$ with the optimal control $\pi^{2,*}(\rho)$ satisfies Gaussian density function,
$$
\pi_t^{2,*}(\rho)=\frac{1}{\sqrt{\pi \lambda}}\exp\left(-\frac{\left(\rho-\alpha_tx\right)^2}
{\lambda}\right),\quad 0\leq t\leq T.
$$
For notation simplicity, we denote by
$$
\pi_t^{2,*}(\cdot)\overset{d}{=} \mathcal{N}(\alpha_tx,\ \frac{\lambda}{2}).
$$

We conclude the above main results in the following theorem.
\begin{theorem}\label{the-4}
When both the drift and diffusion terms of the linear SDE (\ref{ge-eq-sde}) contain the unknown parameters $\alpha(\cdot)$ and $\beta(\cdot)$, based on the exploratory equations (\ref{2ge-exam-sde-3}) and (\ref{ge-exam-sde-3}), and related cost functionals  (\ref{2ge-exam-cost-2}) and (\ref{ge-exam-cost-2}), we have the optimal density functions for the parameters $\beta(\cdot)$ and $\alpha(\cdot)$, respectively,
$$
\pi_t^{1,*}(\cdot)\overset{d}{=} \mathcal{N}(\beta_tx,\ \frac{\lambda}{2\theta_1(t)}),\quad
\pi_t^{2,*}(\cdot)\overset{d}{=} \mathcal{N}(\alpha_tx,\ \frac{\lambda}{2}),
$$
where
$$
 \theta_1(t)=\exp\left(\int_t^T[2(\alpha_s-\beta_s)-1]\mathrm{d}s\right).
$$
Based on $\pi_t^{1,*}(\cdot)$ and $\pi_t^{2,*}(\cdot)$, we can learn the unknown parameters $\beta(\cdot)$ and $\alpha(\cdot)$.
\end{theorem}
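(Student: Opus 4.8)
The plan is to apply the reduction of Sections \ref{sec:2}--\ref{sec:3} twice, once for each unknown parameter, and in each instance to obtain the optimal density in closed form by solving the exploratory HJB equation under a quadratic-in-$x$ ansatz for the value function. The two facts I would exploit each time are: first, that the exponent appearing in the minimizer formula (\ref{optim-1}) is, for these linear examples, a quadratic polynomial in the control variable, so the optimal density is automatically Gaussian with mean and variance expressed through $\partial_x v$ and $\partial_{xx} v$; and second, that the linear--quadratic structure of (\ref{ge-eq-sde}) together with the chosen quadratic cost functionals forces the value function to stay quadratic in $x$, so that the HJB equation collapses to scalar ODEs for the two time-dependent coefficients. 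The logical order is to estimate $\beta(\cdot)$ first (via $J_1$) and then $\alpha(\cdot)$ (via $J_2$), since the dynamics used to generate the learning data for $\alpha(\cdot)$ involve $\beta_sX^*_s$.

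\textbf{Estimating $\beta(\cdot)$.} Starting from $J_1$ in (\ref{2ge-exam-cost-1}), I would first solve the classical HJB equation to get the feedback control $u^*(s,X^*_s)=-(1+\beta_s)X^*_s$, observe that it contains $\beta(\cdot)$ only through the product $\beta_sX^*_s$, replace that product by the deterministic control $\gamma_s$ to arrive at (\ref{2ge-exam-sde-2}), and then invoke Theorem \ref{the-rep} to pass to the exploratory system (\ref{2ge-exam-sde-3})--(\ref{2ge-exam-cost-2}). Next I would read off from (\ref{2ge-exam-optim-1}) that the exponent is quadratic in $\gamma$ with leading coefficient $\frac{1}{2\lambda}\partial_{xx}v(t,x)$, hence $\pi^{1,*}_t$ is Gaussian with mean $\mu(t,x)=((\beta_t-1)x\,\partial_{xx}v+\partial_xv)/\partial_{xx}v$ and variance $\lambda/\partial_{xx}v$ whenever $\partial_{xx}v>0$. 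I would then substitute $v(t,x)=\theta_1(t)x^2+\theta_2(t)$ (so $\partial_{xx}v=2\theta_1(t)$, $\partial_xv=2\theta_1(t)x$) into the exploratory HJB equation (\ref{2ge-exam-hjb-1}), compute the Gaussian moments, and match powers of $x$: the coefficient of $x^2$ yields the homogeneous linear ODE $\theta_1'(t)+[2(\alpha_t-\beta_t)-1]\theta_1(t)=0$ with $\theta_1(T)=1$ (from $v(T,x)=x^2$), solved by $\theta_1(t)=\exp(\int_t^T[2(\alpha_s-\beta_s)-1]\,\mathrm{d}s)$, and the $x$-free part fixes $\theta_2$ by a quadrature with $\theta_2(T)=0$. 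Since $\theta_1(t)>0$ the Gaussian is well defined; substituting $\partial_xv,\partial_{xx}v$ back collapses the mean to $\beta_tx$ and the variance to $\lambda/(2\theta_1(t))$, which is the claimed $\pi^{1,*}_t\overset{d}{=}\mathcal N(\beta_tx,\lambda/(2\theta_1(t)))$.

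\textbf{Estimating $\alpha(\cdot)$.} The same recipe applies to $J_2$ in (\ref{ge-exam-cost-1}): minimization gives $u^*_s=-\alpha_sX^*_s-\beta_sX^*_s$, and replacing $\alpha_sX^*_s$ by the deterministic control $\rho_s$ turns (\ref{ge-exam-sde-1}) into (\ref{ge-exam-sde-2}), with running cost $\hat f=(\rho-\alpha_sx)^2$ and $\hat\sigma^2=\rho^2$; Theorem \ref{the-rep} then yields (\ref{ge-exam-sde-3})--(\ref{ge-exam-cost-2}). From (\ref{ge-exam-optim-1}) the exponent is quadratic in $\rho$ with leading coefficient $\frac1\lambda(1+\frac12\partial_{xx}v)$, so $\pi^{2,*}_t$ is Gaussian with mean $(2\alpha_tx+\partial_xv)/(2+\partial_{xx}v)$ and variance $\lambda/(2+\partial_{xx}v)$. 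With the ansatz $v(t,x)=\theta_1(t)x^2+\theta_2(t)$, the point I would emphasize is that the running cost integrates to $\mathbb E_{\pi^{2,*}_t}[(\rho-\alpha_tx)^2]=(\mathrm{mean}-\alpha_tx)^2+\mathrm{variance}$, whose $x^2$-part is proportional to $\theta_1$ and hence vanishes exactly when $\theta_1=0$; checking that $\theta_1\equiv0$ solves the $x^2$-coefficient equation derived from (\ref{ge-exam-hjb-1}) with the correct terminal value $\theta_1(T)=0$ and invoking uniqueness identifies it as the value-function coefficient, with $\theta_2$ again by quadrature. Then $\partial_{xx}v=\partial_xv=0$, the mean is $\alpha_tx$, the variance is $\lambda/2$, and $\pi^{2,*}_t\overset{d}{=}\mathcal N(\alpha_tx,\lambda/2)$; together with the previous paragraph this gives the theorem.

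\textbf{Main obstacle.} The step I expect to carry the real content is verifying that the quadratic-in-$x$ ansatz is self-consistent, i.e. that after inserting the Gaussian minimizer (\ref{optim-1}) into the exploratory HJB equation and performing the Gaussian integration the only surviving $x$-dependence is through $x^2$ (the entropy term being $x$-free), so that the HJB equation genuinely decouples into one scalar ODE plus one quadrature. For $J_1$ this hinges on the quadratic terminal cost; for $J_2$ it hinges on the running cost being the squared deviation of $\rho$ around $\alpha_sX^u_s$, which is precisely what makes its Gaussian expectation $x$-independent along the solution. A secondary, easier check is positivity of the leading coefficient of the quadratic exponent ($\frac12\partial_{xx}v$ in the first case, $1+\frac12\partial_{xx}v$ in the second), needed so that the normalizing integral in (\ref{optim-1}) converges; this is automatic here because $\theta_1(t)>0$ for $J_1$ and $\theta_1\equiv0$ for $J_2$.
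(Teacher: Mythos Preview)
Your proposal is correct and follows essentially the same route as the paper: the paper's argument is precisely the two-step calculation in Section~\ref{sec:5} preceding the theorem, namely passing from the classical feedback control to the $\gamma$- (resp.\ $\rho$-) parametrized problem, invoking Theorem~\ref{the-rep}, reading off the Gaussian minimizer from (\ref{optim-1}), substituting the quadratic ansatz $v(t,x)=\theta_1(t)x^2+\theta_2(t)$, and solving the resulting scalar ODEs. Your additional remarks on positivity of the quadratic coefficient and on the self-consistency of the ansatz make explicit checks that the paper leaves implicit, but the overall line of argument is the same.
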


\begin{remark}
\cite{WZZ20b} investigated a continuous time mean-variance portfolio selection with reinforcement learning model, and developed an implementable reinforcement learning algorithm. Based on the results in \cite{WZZ20b}, we can obtain the parameters appeared in density functions $\pi_t^{1,*}(\cdot)$ and $\pi_t^{2,*}(\cdot)$, which can be used to estimate parameters $\beta(\cdot)$ and $\alpha(\cdot)$. Based on the estimations of $\beta(\cdot)$ and $\alpha(\cdot)$, we can do empirical analysis for the classical optimal control problem and so on.
\end{remark}

\section{Conclusion}\label{sec:6}

Combining the stochastic optimal control model and the reinforcement learning structure, we develop a novel approach to learn the unknown parameters appeared in the drift and diffusion terms of the stochastic differential equation. By choosing an appropriate cost functional, we can obtain a feedback optimal control $u^*(\cdot)$ which contains the unknown parameter $\beta(\cdot)$, denote by $u^*_s=u^*(\beta_s,s,X^*_s),\ t\leq s\leq T$. We take place the unknown parameter $\beta(\cdot)$ in the optimal control $u^*(\cdot)$ as a new deterministic control $\rho(\cdot)$, and obtain a new input control $u^{\rho}_s=u^{\rho}(\rho_s,s,X^{\rho}_s),\ t\leq s\leq T$. Putting the new control $u^{\rho}(\cdot)$ into the related stochastic differential equation and cost functional. We establish the mathematical framework of the exploratory dynamic equation and cost functional, which is consistent with the structure introduced in \cite{WZZ20a}.

Indeed, the optimal control of the new exploratory control problem can be used to estimate unknown parameters. Therefore, we consider the linear stochastic differential equation case where the drift or diffusion term with unknown parameter. Then, we investigate the general case where both the drift and diffusion terms contain unknown parameters. For the above cases, we show that the optimal density function satisfies a Gaussian distribution which can be used to estimate the unknown parameters. When both the drift and diffusion terms of the linear SDE contain the unknown parameters $\alpha(\cdot)$ and $\beta(\cdot)$, based on the exploratory equations, and related cost functionals, we obtain the optimal density functions for the parameters $\alpha(\cdot)$ and $\beta(\cdot)$, respectively. The optimal density functions are given by
$$
\pi_t^{1,*}(\cdot)\overset{d}{=} \mathcal{N}(\beta_tx,\ \frac{\lambda}{2\theta_1(t)}),\quad
\pi_t^{2,*}(\cdot)\overset{d}{=} \mathcal{N}(\alpha_tx,\ \frac{\lambda}{2}),
$$
where
$$
 \theta_1(t)=\exp\left(\int_t^T[2(\alpha_s-\beta_s)-1]\mathrm{d}s\right).
$$
Based on $\pi_t^{1,*}(\cdot)$ and $\pi_t^{2,*}(\cdot)$, we can learn the unknown parameters $\beta(\cdot)$ and $\alpha(\cdot)$.

In this present paper, we focus on develop the theory of a stochastic optimal control approach with reinforcement learning structure to estimate the unknown parameters appeared in the model. Indeed, based on the existing optimal control learning methods, for example policy evaluation and policy improvement developed in \cite{WZZ20b}, we can learn the optimal density functions, and then obtain the estimations of the unknown parameters. These estimations are useful in the related investment portfolio problems, for example mean-variance investment portfolio. We will further consider these problems in the future work.

\bibliography{gexp}
\end{document}